\documentclass[12pt, reqno]{amsart}
\usepackage{amsmath,amssymb,amsthm,graphicx,mathrsfs,url} % Required for inserting images
\usepackage{amsmath,amssymb,amsthm,tikz}
\usetikzlibrary{decorations.pathreplacing}
\usepackage[toc,page]{appendix}
\usepackage{comment}
\usepackage[dvipsnames]{xcolor}
\usepackage[margin=0.9in]{geometry}
\usepackage[colorlinks=true, citecolor=Green]{hyperref}
\usepackage[english]{babel}
\usepackage[T1]{fontenc}
\usetikzlibrary{arrows.meta,calc}
\usepackage{amsthm}
\usepackage{ragged2e} % pour \RaggedLeft
\usepackage{etoolbox}

\usetikzlibrary{decorations.pathreplacing}

\newcommand{\tr}{\mathrm{tr}}
\newcommand{\eps}{\varepsilon}

\newtheorem{theorem}{Theorem}[section]
\newtheorem{lemma}[theorem]{Lemma}
\newtheorem{assumption}[theorem]{Assumption}

\newtheorem{corollary}[theorem]{Corollary}
\newtheorem{definition}[theorem]{Definition}
\newtheorem{remark}[theorem]{Remark}

\newtheorem{proposition}[theorem]{Proposition}
\newtheorem*{theoremfrench*}{Théorème}
\newtheorem*{theorem*}{Theorem}

\title[Unstable entropy for Anosov diffeomorphisms on tori]{Unstable entropy for Anosov diffeomorphisms on tori isotopic to Anosov linear maps}
\author{Tristan Humbert}
\begin{document}
\maketitle
\begin{abstract}
Let $f$ be an Anosov diﬀeomorphism on the $d$-torus which is strongly partially hyperbolic with expanding center. If $d=3$ or if $f$ is isotopic to its linear part, where the isotopy is contained in the set of partially hyperbolic diﬀeomorphisms, we construct systems of strong-unstable and center-stable Margulis measures which are  holonomy invariant. As a consequence, we obtain a characterization of measures of maximal unstable entropy in terms of their conditional measures along strong-unstable leaves. Moreover, we show that the unstable topological entropy, a measure of maximal unstable entropy and the Margulis system can be retrieved from the Pollicott-Ruelle spectrum of $f$. 
\end{abstract}

\maketitle
\section{Introduction}
\subsection{Anosov diffeomorphism with a partially hyperbolic splitting.} Let $d\geq 3$ be an integer. In this paper, we consider $f\in \mathrm{Diff}^\infty(\mathbb T^d)$ a smooth topologically transitive diffeomorphism of the $d$-torus $\mathbb T^d$ for which there exists a continuous and $df$-invariant splitting of the tangent bundle $T(\mathbb T^d)=E_s\oplus E_c\oplus E_u$ where $E_s$ (resp. $E_c$ and $E_u$) is uniformly contracted (resp. are uniformly expanded) by $df$ and where $E_u$ dominates $E_c$. In other words, $f$ can be seen either  as an \emph{Anosov} diffeomorphism with stable (resp. unstable) bundle given by $E_s$ (resp. $E_{cu}:=E_c\oplus E_u$) or as a partially hyperbolic diffeomorphism with (uniformly expanding) center bundle $E_c$ and strong-unstable bundle given by $E_u$. Let $d_\bullet=\mathrm{dim}(E_\bullet)$ for $\bullet=s,c,u$. We will denote by $\mathcal A^\infty_{d_s,d_c,d_u}(\mathbb T^d)\subset  \mathrm{Diff}^\infty(\mathbb T^d)$ the set of such diffeomorphisms. We make the following assumption on $f$.
\begin{assumption}
\label{assump}
The bundles $E_{cs}$ and $E_u$ are orientable and $f$ preserves the orientation of $E_{cs}$ and $E_u$.
\end{assumption} We make this assumption for convenience of the exposition. Note that, up to passing to a double cover and considering $f^2$, the above assumption can always be achieved.
\subsection{Unstable entropy} Given $f\in  \mathcal A^\infty_{d_s,d_c,d_u}(\mathbb T^d)$ and a $f$-invariant probability measure $\mu$, we can consider the \emph{unstable topological entropy} (resp. \emph{unstable metric entropy} with respect to $\mu$) denoted by $h^u_\mathrm{top}(f)$ (resp. $h_\mu^u(f)$). These invariants, introduced in \cite{SaXi,HuHuWu}, correspond to partial entropies measuring the complexity of the dynamics of $f$ along the strong-unstable foliation, see \S \ref{sec:entropy} for more details. The two notions of entropy are linked by a \emph{variational principle} \cite[Theorem D]{HuHuWu}, 
\begin{equation}
\label{eq:varia111}
h^u_{\mathrm{top}}(f)=\sup_{\mu \in \mathcal P_f(\mathbb T^d)}h^u_\mu(f),
\end{equation}
where $\mathcal P_f(\mathbb T^d)$ denotes the set of $f$-invariant probability measures on $\mathbb T^d$.
Of particular interest is the study of \emph{measures of maximal $u$-entropy} or \emph{$u$-MMEs} corresponding to probability measures attaining the supremum in \eqref{eq:varia111}. Their set, which will be denoted by $\mathcal{M}^u(f)$, is non-empty, convex and compact. Moreover, the extreme points are precisely the ergodic measures of maximal $u$-entropy, see \cite[Theorem B]{HuWuZh}. 
\subsection{Margulis system of measures}
Let $A\in \mathrm{SL}(d,\mathbb Z)\cap \mathcal A^\infty_{d_s,d_c,d_u}(\mathbb T^d)$ and denote  by $\mathcal A^\infty_A(\mathbb T^d)$ the set of $f\in \mathcal A^\infty_{d_s,d_c,d_u}(\mathbb T^d)$ which are in the connected component of $A$ in the space of partially hyperbolic diffeomorphisms. Note that for any $f\in \mathcal A^\infty_A(\mathbb T^d)$, the matrix $A$ coincides with the linearization (i.e. the action on homology) of $f$. 
 The diffeomorphism $f$ is \emph{dynamically coherent}, that is, the center-stable bundle $E_{cs}:=E_c\oplus E_s$ integrates to an invariant foliation $(\mathcal W^{cs}(x))_{x\in \mathbb T^d}$, see \cite{FPS}. We refer to \S \ref{sec:diffeo} for the detailed definitions and more properties of $ \mathcal A^\infty_{d_s,d_c,d_u}(\mathbb T^d)$ and of $\mathcal A^\infty_A(\mathbb T^d)$.
 
 Note that the strong-unstable bundle $E_u$ always integrates to an invariant foliation which we denote by $(\mathcal W^{u}(x))_{x\in \mathbb T^d}$, see \S  \ref{sec:mani} for a more detailed discussion about invariant manifolds of partially hyperbolic diffeomorphisms.
In order to study $u$-MMEs, we will construct two systems of measures $(\mu^u_x)_{x\in \mathbb T^d}$ (resp. $(\mu^{cs}_x)_{x\in \mathbb T^d}$) supported on these manifolds.

A \emph{system of $u$-measures} $($resp. \emph{$cs$-measures$)$} is a family of Borelian measures $(\mu^u_x)_{x\in \mathbb T^d}$ (resp. $(\mu^{cs}_x)_{x\in \mathbb T^d})$ on strong-unstable manifolds $(\mathcal W^u(x))_{x\in \mathbb T^d}$ (resp. center-stable manifolds $(\mathcal W^{cs}(x))_{x\in \mathbb T^d}$), see \S \ref{sec:system} for a precise definition. For $p,q\in \mathbb T^d$, close enough, we denote by $\mathrm{Hol}^u_{p,q}:\mathcal W^{cs}(p)\to \mathcal W^{cs}(q)$ (resp. $\mathrm{Hol}^{cs}_{p,q}:\mathcal W^{u}(p)\to \mathcal W^{u}(q)$) the local $u$-holonomy (resp. $cs$-holonomy), see \S \ref{sec:product} for a review of this notion. 
 
To state our results, let us define the \emph{center Jacobian}. There is $\alpha>0$ such that
$$
J^c_f\in C^{\alpha}(\mathbb T^d,\mathbb R),\ \forall x\in \mathbb T^d, \quad J_f^c(x):=-\log| \mathrm{det}(df_x|_{E_c(x)})|.
$$
Recall that as $f$ is Anosov, there is a unique equilibrium state $\mu_{J^c_f}$ associated to the potential $J_f^c$. 
We will denote by $\lambda^u(A):=\mathrm{det}(A|_{E_u^A})$ the determinant of the matrix $A$ on the strong-unstable bundle $E_u^A$ of $A$. Note that $\lambda^u(A)$ is equal to the product of the $d_u$ eigenvalues of $A$ with the largest modulus. When $f$ satisfies Assumption \ref{assump}, $\lambda^u(A)$ also corresponds to the spectral radius of $f_{*,d_u}:H_{d_u}(\mathbb T^d,\mathbb R)\to H_{d_u}(\mathbb T^d,\mathbb R)$, the action of $f$ on the homology group of degree $d_u$, i.e., $\lambda^u(A)=\max\{|\mu|, \mu \in \mathrm{Spec}(f_{*,d_u})\}$.

In this paper, we compute $h^u_{\mathrm{top}}(f)$ and show that $\mu_{J^c_f}$ is a $u$-MME for $f\in\mathcal A^\infty_A(\mathbb T^d)$. As a key step in the proof, we construct two \emph{Margulis systems} adapted to the partially hyperbolic splitting which give a characterization of $u$-MMEs in terms of their conditional measures along strong-unstable manifolds.
\begin{theorem}
\label{main theo}
 Let $f\in\mathcal A^\infty_{d_s,d_c,d_u}(\mathbb T^d)$ satisfying Assumption \ref{assump}. Suppose that:
 \begin{itemize}
 \item either $d=3$;
\item or $f\in A^\infty_A(\mathbb T^d)$ for some $A\in \mathrm{SL}(d,\mathbb Z)\cap \mathcal A^\infty_{d_s,d_c,d_u}(\mathbb T^d)$.
 \end{itemize}
 Then we have
 \begin{equation}
\label{eq:equality}
e^{h^u_{\mathrm{top}}(f)}=\lambda^u(A)=e^{h^u_{\mathrm{top}}(A)}=e^{P(J^c_f)}, \quad \mu_{J^c_f}\in \mathcal{M}^u(f).
\end{equation}
Moreover, there exists a system of $u$-measures $(\mu^{u}_x)_{x\in \mathbb T^d}$ and a system of $cs$-measures $(\mu^{cs}_x)_{x\in \mathbb T^d}$ such that:
\begin{itemize}
\item  the systems are conformal with respect to $f$:
\begin{equation}
\label{eq:conf}
\forall x\in \mathbb T^d, \quad  f_*\mu^{cs}_x=e^{h^u_{\mathrm{top}}(f)}\mu^{cs}_{f(x)}, \quad  f^*\mu^u_x=e^{h^u_{\mathrm{top}}(f)}\mu^u_{f^{-1}(x)};
\end{equation} 
\item the systems are holonomy invariant:
\begin{equation}
\label{eq:holhol}
 (\mathrm{Hol}^{u}_{y,w})_*\mu^{cs}_y= \mu^{cs}_w, \quad (\mathrm{Hol}^{cs}_{x,z})_*\mu^u_x= \mu^u_z,
\end{equation}
for any points $x,z$ $($resp. $y,w)$ for which the center-stable $($resp. strong-unstable$)$ holonomy is well defined. 
\end{itemize}
Finally, for any $\nu\in \mathcal P_f(\mathbb T^d)$, we have $\nu\in \mathcal M^u(f)$ if and only if the conditionals of $\nu$ along $\mathcal W^u$ are given by $(\mu^u_x)_{x\in \mathbb T^d}$ $($up to a constant rescaling$)$, $\nu$-a.e.
\end{theorem}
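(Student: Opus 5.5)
The plan is to build both Margulis systems from the equilibrium state of the center Jacobian and then reach the entropy statements through the Ledrappier--Young type characterization of $u$-MMEs. Since $f$ is Anosov with unstable bundle $E_{cu}$, the equilibrium state $\mu_{J^c_f}$ is a Gibbs measure. Its local product structure along $\mathcal W^s\times\mathcal W^{cu}$ gives a family of conditionals $(\mu^{cs}_x)$ on center-stable leaves, obtained from the $s$-conditionals and the $c$-part of the $cu$-conditionals. These satisfy $f_*\mu^{cs}_x=e^{P(J^c_f)}\mu^{cs}_{f(x)}$, because the potential $-\log|\det df|_{E_c}|$ exactly compensates the center expansion. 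Holonomy invariance under $\mathrm{Hol}^u$ follows because strong-unstable holonomies inside $\mathcal W^{cu}$ leaves are absolutely continuous with $C^\alpha$ Jacobians. I would then renormalize by the cohomologous correction coming from the $u$-Jacobian. Concretely, I would write $J^c_f$ as $-\log|\det df|_{E_{cu}}|+\log|\det df|_{E_u}|$ and use Gibbs/conditional measure properties along $cu$-leaves, which are absolutely continuous w.r.t.\ Lebesgue for $\phi^{u}_{geo}$.

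Dually, I would construct $(\mu^u_x)$ by a Bowen--Margulis type limit on strong-unstable leaves. For a $u$-disc $D$, I would take normalized push-forward limits $e^{-nP}(f^{-n})_*\mathrm{Leb}$, or better define
\[
\mu^u_x(D)=\lim_n e^{-nP}\,\mu^{cs}\text{-mass of }\mathrm{Hol}\text{-saturations},
\]
that is, define $\mu^u_x$ as the transverse measure to $\mathcal W^{cs}$ induced by the product decomposition $\mu_{J^c_f}\simeq \mu^u\otimes\mu^{cs}$. Here local product structure of $\mu_{J^c_f}$ with respect to the pair $(\mathcal W^u,\mathcal W^{cs})$ is what has to be proved. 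Conformality $f^*\mu^u_x=e^{P}\mu^u_{f^{-1}x}$ and $\mathrm{Hol}^{cs}$-invariance then follow from invariance of $\mu_{J^c_f}$ and uniqueness of the decomposition.

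The key step, and the main obstacle, is showing that $\mu^u$ has full support, is finite and nonzero on every $u$-disc, and grows at rate exactly $\lambda^u(A)$. This is where the hypotheses ($d=3$, or isotopy to $A$ through partially hyperbolic maps) enter. I would use the semiconjugacy/leaf conjugacy with $A$ given in \S\ref{sec:diffeo}: the $cs$-foliation of $f$ is mapped to that of $A$, and $u$-leaves are quasi-isometric and lift to graphs over $E^A_u$ in $\mathbb R^d$. The transverse measure $\mu^u$ then defines a holonomy-invariant transverse measure to $\mathcal W^{cs}$, i.e.\ a closed current of degree $d_u$. Its homology class is an eigenvector of $f_{*,d_u}$ with eigenvalue $e^{P}$, and positivity together with Assumption~\ref{assump} forces this eigenvalue to be the spectral radius. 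This gives $e^{P(J^c_f)}=\lambda^u(A)=e^{h^u_{\mathrm{top}}(A)}$, the last equality being the linear computation.

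It then remains to identify this value with $h^u_{\mathrm{top}}(f)$ and to characterize $\mathcal M^u(f)$. For the upper bound, the volume of $f^n(D)$ measured by $\mu^u$ grows like $\lambda^u(A)^n$, and Bowen $(n,\eps)$-balls in $u$-leaves have $\mu^u$-mass comparable to $\lambda^u(A)^{-n}$. This uses uniform bounds on $\mu^u$ of small discs, obtained from conformality and compactness. Hence $h^u_{\mathrm{top}}(f)\le\log\lambda^u(A)$. For the lower bound, $h^u_{\mu_{J^c_f}}(f)\ge\log\lambda^u(A)$ holds because its $u$-conditionals are $\mu^u$. Finally, for any $\nu$, a Ledrappier--Young/Jensen argument gives $h^u_\nu(f)\le\log\lambda^u(A)$. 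The argument compares $\nu$'s $u$-conditionals with $\mu^u$ on an increasing partition subordinate to $\mathcal W^u$, and equality holds iff the conditionals coincide with $\mu^u$ up to scaling, $\nu$-a.e.
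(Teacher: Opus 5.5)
There is a genuine gap at the foundation of your plan. Everything rests on a local product structure of $\mu_{J^c_f}$ with respect to $(\mathcal W^u,\mathcal W^{cs})$ whose factors are \emph{exactly} conformal and \emph{exactly} holonomy invariant. You acknowledge this ``is what has to be proved'' but do not prove it, and the justifications you sketch do not give it.

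\begin{itemize}
\item The $cu$-conditionals of $\mu_{J^c_f}$ are those of a Gibbs state whose potential differs from the geometric one, $-\log|\det df|_{E_{cu}}|$, by $\log|\det df|_{E_u}|$. This is not a coboundary in general, so those conditionals are typically singular to leaf Lebesgue.
\item No ``cohomological correction'' splits them into a center-Lebesgue factor and a $u$-factor. The ``$c$-part of the $cu$-conditionals'' is not defined.
\item Absolute continuity of $\mathrm{Hol}^u$ gives at most quasi-invariance with a H\"older Jacobian, not invariance.
\item The compensation heuristic for $f_*\mu^{cs}_x=e^{P(J^c_f)}\mu^{cs}_{f(x)}$ already presupposes that factorization.
\item The step is circular. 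By the Buzzi--Fisher--Tahzibi characterization, knowing that the $u$-conditionals of $\mu_{J^c_f}$ form a conformal, fully supported Margulis system amounts to knowing $\mu_{J^c_f}\in\mathcal M^u(f)$, which you are trying to prove. So your lower bound $h^u_{\mu_{J^c_f}}(f)\ge\log\lambda^u(A)$ has no support.
\item The homology step is only sketched. You do not argue why the class is non-zero or why it must see the top eigenvalue. It also invokes quasi-isometry of $\mathcal W^u$, which is not available in this generality and is not needed.
\end{itemize}

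The missing idea is to build the $u$-system first, directly from the conjugacy and with no reference to $\mu_{J^c_f}$. On $\mathbb R^d$:
\begin{itemize}
\item $Q=\pi^u_A\circ H$ is constant on $\widetilde{\mathcal W^{cs}_f}$-leaves, because $f$ is center-fibered.
\item $Q$ restricts to a homeomorphism $\widetilde{\mathcal W^u_f}(\tilde x)\to E^A_u$, by the global $cs/u$ product structure and invariance of domain.
\item Hence $\mu^u_{\tilde x}:=(Q_{\tilde x}^{-1})_*\mathrm{Leb}_{E^A_u}$ is $\mathbb Z^d$-equivariant, fully supported and $\mathrm{Hol}^{cs}$-invariant. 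It is conformal with factor exactly $\lambda^u(A)$, since $\pi^u_A A=A|_{E^A_u}\pi^u_A$.
\end{itemize}

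Even with this system you still need a bridge from $\lambda^u(A)$ to $P(J^c_f)$, and your plan has none that does not go through the circular step. In the paper it is a sandwich:
\begin{itemize}
\item Ledrappier--Young gives $P(J^c_f)\le h^u_{\mathrm{top}}(f)$.
\item Buzzi--Fisher--Tahzibi applied to the explicit system gives $h^u_{\mathrm{top}}(f)\le\log\lambda^u(A)$. Your volume-growth and Jensen arguments are fine for this part.
\item $\lambda^u(A)\le e^{P(J^c_f)}$ holds because $\mu^u$ defines a co-resonant state ($\mathrm{WF}\subset E_s^*$) of $f$ on $(d_s+d_c)$-forms. The dynamical-determinant computation shows that $e^{P(J^c_f)}$ is the first resonance for that action.
\end{itemize}

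Only after this sandwich is $\mu_{J^c_f}$ known to be a $u$-MME with $u$-conditionals $\mu^u$. The $cs$-system is then defined as $\tilde\mu_{J^c_f}([Z,Y]_{cs,u})/\mu^u_{\tilde x_0}(Z)$. Its conformality and $\mathrm{Hol}^u$-invariance follow from those of $\mu^u$ together with the invariance of $\mu_{J^c_f}$.
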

 Theorem \ref{main theo} recovers \cite[Theorems 1 and 2]{Humu} for diffeomorphisms of the $3$-torus and extends these results to higher dimensional tori when $f$ is in the connected component of its action on homology. The overall strategy of the proof follows the ideas developed in \cite{Humu} although we simplify some arguments, see \S \ref{sec:comp} for further discussion.
\begin{remark}
As a byproduct of the proof, we show that $\mu_{J^c_f}$ has a $cs/u$ product structure. For any rectangle $R\subset \mathbb T^d$ and $q\in R$, there is $c_R>0$ such that $($see \S \ref{sec:product}  for the notation$)$:
\begin{equation}
\label{eq:localproduct}
\forall \phi\in L^1(R,\mu_{J^c_f}),\quad \mu_{J^c_f}(\phi)=c_R\int_{\mathcal W^{cs}_R(q)}\int_{\mathcal W^{u}_R(y)}\phi(z)d\mu^{u}_y(z)d \mu^{cs}_q(y). 
\end{equation}
\end{remark}
\subsection{Pollicott-Ruelle resonances}
As in \cite{Humu}, the Margulis systems and unstable topological entropy are related to \emph{Pollicott-Ruelle resonances} of $f$. For $0\leq k \leq d$, let $\mathcal D'_{k}(\mathbb T^d)$ denote the space of $k$-currents, that is, the dual space to smooth $(d-k)$-forms. Let us define a splitting of the cotangent bundle, dual to the Anosov splitting of $f$,
\begin{equation}
\label{eq:dual1}
T^*(\mathbb T^d)=E_s^*\oplus E_{cu}^*,\quad E_s^*(E_s)=0, \ E_{cu}^*(E_{cu})=0.
\end{equation}
One can associate to an Anosov map $f$ a discrete spectrum $\mathrm{Res}_k(f)\subset \mathbb C$ : the \emph{Pollicott-Ruelle resonances}. Recall that $\lambda \in \mathrm{Res}_k(f)$ if and only if
\begin{equation}
\label{eq:res}
\exists u\in \mathcal D'_k(\mathbb T^d)\setminus\{0\},\quad f_*u=\lambda u, \ \mathrm{WF}(u)\subset E_{cu}^*. 
\end{equation}
Here, $\mathrm{WF}(u)\subset T^*\mathbb T^d\setminus \{0\}$ denotes the wavefront set of $u$, see \cite[Chapter VIII]{Hor}. In this case, the current $u$ is called a \emph{resonant state} associated to the resonance $\lambda$. We have a dual notion of \emph{co-resonant state}:
\begin{equation}
\label{eq:cores}
\exists u\in \mathcal D'_{d-k}(\mathbb T^d)\setminus\{0\},\quad f^*u=\lambda u, \ \mathrm{WF}(u)\subset E_{s}^*. 
\end{equation}
 We say that $\lambda \in \mathbb R_+$ is the \emph{first resonance for the action on $k$-forms} if $\lambda\in \mathrm{Res}_k(f)$ and $\mathrm{Res}_k(f)\subset \{z\in \mathbb C, |z|\leq \lambda\}.$ 

\begin{theorem}[First Pollicott-Ruelle resonance]
\label{theores}
Let $f\in\mathcal A^\infty_{d_s,d_c,d_u}(\mathbb T^d)$ satisfying Assumption \ref{assump}. Suppose that:
 \begin{itemize}
 \item either $d=3$;
\item or $f\in A^\infty_A(\mathbb T^d)$ for some $A\in \mathrm{SL}(d,\mathbb Z)\cap \mathcal A^\infty_{d_s,d_c,d_u}(\mathbb T^d)$.
 \end{itemize}
 Then $e^{h^u_{\mathrm{top}}(f)}$ is the first Pollicott-Ruelle resonance for the action on $(d_s+d_c)$-forms. Moreover, $e^{h^u_{\mathrm{top}}(f)}$ is the only resonance on the critical circle $\{ z\in \mathbb C, |z|=e^{h^u_{\mathrm{top}}(f)}\}$, it has no Jordan block and the space of resonant $($resp. co-resonant$)$ states is one-dimensional.

The Margulis system $(\mu_{x}^u)_{x\in \mathbb T^d}$ defines a $d_u$-current $\mu^u$ which is a co-resonant state associated to the first resonance $e^{h^u_{\mathrm{top}}(f)}$, that is,
\begin{equation}
\label{eq:muucores}
f^*\mu^u=e^{h^u_{\mathrm{top}}(f)}\mu^u,\quad \mathrm{WF}(\mu^u)\subset E_s^*.
\end{equation}Finally, the trace of the spectral projector at $e^{h^u_{\mathrm{top}}(f)}$ is given by $\mu_{J^c_f}$.
\end{theorem}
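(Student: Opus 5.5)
We take Theorem \ref{main theo} as given and transfer its output to the language of currents, following \cite{Humu}. Throughout, $\lambda:=e^{h^u_{\mathrm{top}}(f)}$.

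\textbf{Step 1: build the co-resonant state.} We define a $d_u$-current $\mu^u$ by
\[
\langle \mu^u,\omega\rangle=\int_{\mathbb T^d}\int_{\mathcal W^u_R(x)}\omega\, d\mu^u_x\, d\mu^{cs}_q(x).
\]
Here $\omega$ is a smooth $d_u$-form; we first take it supported in a rectangle $R$, where it is integrated along the oriented local $u$-plaques against the Margulis measures, and then glue over a partition of unity. Assumption \ref{assump} is used to orient these plaques. Holonomy invariance \eqref{eq:holhol} shows that the definition does not depend on the choices of rectangles and base points. Conformality \eqref{eq:conf} then gives $f^*\mu^u=\lambda\mu^u$.

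\textbf{Step 2: wavefront bound.} Locally, $\mu^u$ is the superposition $\int [\mathcal W^u(y)]\,d\mu^{cs}(y)$ of currents of integration over $u$-leaves. Near a smooth approximation of the leaves, each such current has wavefront set in the conormal $E_u(y)^\perp$. This alone is not enough, since $E_u^\perp\supsetneq E_s^*$.

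To handle this, we use regularity along $cs$. By holonomy invariance, $\mu^u$ is a smooth family in the $E_s$ direction: the holonomies along $\mathcal W^{cs}$ restricted to $\mathcal W^s$ are smooth, or we use the $\mathcal W^s$-subfoliation of the $cs$-leaves. I would rather argue dynamically. Choose a smooth $d_u$-current $u_0$ close to $\mu^u$ in a suitable anisotropic space $\mathcal H$, on which $f^*$ has essential spectral radius less than $\lambda$; such spaces are adapted to $E_s^*$, as in Faure--Sjöstrand and Dyatlov--Zworski. Then $\lambda^{-n}(f^*)^n u_0\to \mu^u$, and the limit lies in $\mathcal H$, so $\mathrm{WF}\subset E_s^*$. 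This is the main obstacle. The fallback is the direct argument of \cite{Humu}: test $\mu^u$ against $e^{i\xi\cdot x}\chi$ with $\xi$ outside a conic neighborhood of $E_s^*$, and integrate by parts along the smooth $\mathcal W^s$-direction inside the $cs$-leaves, where $\mu^{cs}$ restricted to $s$-leaves is Lebesgue-like by $u$-holonomy invariance and conformality. In the same way, $\mu^{cs}$ gives a resonant state $\mu^{cs}\in\mathcal D'_{d_s+d_c}$ with $f_*\mu^{cs}=\lambda\mu^{cs}$ and $\mathrm{WF}\subset E_{cu}^*$.

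\textbf{Step 3: first resonance.} Suppose $\eta\in\mathrm{Res}_{d_s+d_c}(f)$ with resonant state $v$. We use a growth estimate. Pair $(f_*)^n v$ with smooth forms and bound the result by the volume growth of $u$-disks, whose exponential rate is $h^u_{\mathrm{top}}(f)$ (Yomdin/Kozlovski type, as in \cite{Humu}). This gives $|\eta|\le\lambda$, so $\lambda$ is the first resonance.

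For the critical circle, we pair a resonant state $v$ with $|\eta|=\lambda$ against $\mu^u$. The pairing $v\wedge\mu^u$ is well defined because $E_{cu}^*\cap E_s^*=0$. We then disintegrate using the product structure \eqref{eq:localproduct}. Ergodicity and mixing of $\mu_{J^c_f}$ (it is Gibbs for a transitive Anosov map), together with uniqueness of Margulis systems, force $\eta=\lambda$ and force $v$ to be a multiple of $\mu^{cs}$. The same argument excludes Jordan blocks: a generalized eigenvector would produce polynomial growth $n\lambda^n$, which contradicts a uniform bound $\|\lambda^{-n}f_*^n\|\lesssim1$ on the relevant currents obtained from the Margulis measures. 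The same argument applies symmetrically to co-resonant states.

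\textbf{Step 4: trace of the projector.} The spectral projector is $\Pi=\mu^{cs}\otimes\mu^u/\langle\mu^{cs},\mu^u\rangle$. Its trace is the measure $\phi\mapsto\langle \phi\,\mu^{cs}\wedge\mu^u\rangle$, normalized. By \eqref{eq:localproduct} this equals $\mu_{J^c_f}$.
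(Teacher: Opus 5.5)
\textbf{The current you build is the wrong one.} As you describe it in Step 2, you integrate $d_u$-forms over $u$-plaques with transverse weight $\mu^{cs}$. (The extra $d\mu^u_x$ in Step 1 is incompatible with integrating a form along the plaque.) Call this current $T$; it is exactly the object you later call the resonant state $\mu^{cs}$. Since $(f^{-1})_*\mu^{cs}_q=\lambda^{-1}\mu^{cs}_{f^{-1}(q)}$, it satisfies $f_*T=\lambda T$, not $f^*T=\lambda T$. Its natural wavefront bound is the annihilator of $E_u$, which is not contained in $E_s^*$ (the annihilator of $E_s$). Your inclusion $E_s^*\subsetneq E_u^\perp$ would force $E_u\subset E_s$.

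The co-resonant state attached to the $u$-system is \eqref{eq:current}: integrate $(d_s+d_c)$-forms over $cs$-plaques and use $\mu^u$ as the transverse measure. Independence of $q$ needs only $\mathrm{Hol}^{cs}$-invariance, and \eqref{eq:confu} gives $f^*\mu^u=\lambda^u(A)\mu^u$.

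For the wavefront bound, your main argument is circular. Choosing $u_0$ close to $\mu^u$ in $\mathcal H$ and asserting $\lambda^{-n}(f^*)^nu_0\to\mu^u$ presupposes that $\mu^u\in\mathcal H$ spans the leading spectral projection. Your fallback relies on $\mu^{cs}$ being Lebesgue-like along $s$-leaves, which fails in general: these are stable conditionals of the equilibrium state $\mu_{J^c_f}$. In Lemma~\ref{lemmWF}, the regularity along $E_s$ comes instead from the Riemannian volume of the $cs$-plaques. Disintegrating it along $s$-plaques gives densities $b_q$ that are uniformly $C^r$ along $\mathcal W^s$. This uses smoothness of $E_{cs}$ along $E_s$ (graph transform) and Pesin's product formula for the Jacobian of $s$-holonomy inside $cs$-leaves. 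One then integrates by parts in the stable variable only and never differentiates $\mu^u$.

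\textbf{The spectral statements are not established.} Your Step 3 growth bound is heuristic. Converting volume growth of $u$-disks into a bound on $\mathrm{Res}_{d_s+d_c}(f)$ needs the anisotropic machinery; it does not follow from pairing $f_*^nv$ with smooth forms. Pairing a peripheral resonant state $v$ with $\mu^u$ only gives $\eta\langle v,\mu^u\rangle=\lambda\langle v,\mu^u\rangle$, hence $\langle v,\mu^u\rangle=0$ when $\eta\neq\lambda$, which does not rule $v$ out. Neither uniqueness of Margulis systems nor a bound $\|\lambda^{-n}f_*^n\|\lesssim1$ is available. Step 4 needs the $\mu^{cs}$-current to have wavefront set in $E_{cu}^*$, i.e.\ regularity along $E_c$, which you never prove.

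The missing idea is Proposition~\ref{prop1st}. By domination, for $x\in\mathcal F_m(f)$ the weight $\tr(\Lambda^{d_s+d_c}df^{-m}(x))/|\det(\mathrm{Id}-df^{-m}(x))|$ equals $\exp\big(\sum_{k=0}^{m-1}J^c_f(f^kx)\big)(1+O(e^{-\nu m}))$. Hence $\mathcal D_{f,d_s+d_c}=\zeta_{f,J^c_f}e^{-G}$ with $G$ holomorphic beyond $|w|=e^{-P(J^c_f)}$. Ruelle's theorem for the mixing map then gives a simple zero at $e^{-P(J^c_f)}$ and no other zero in a larger disk. This yields at once:
\begin{itemize}
\item the location of the first resonance;
\item multiplicity one, hence no Jordan block and one-dimensional resonant and co-resonant spaces;
\item isolation on the critical circle.
\end{itemize}
The flat trace formula with a weight $g$, combined with Bowen's periodic-orbit formula, identifies the trace of the projector with $\mu_{J^c_f}$ without ever identifying the resonant state. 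Then $P(J^c_f)=h^u_{\mathrm{top}}(f)=\log\lambda^u(A)$ (Corollary~\ref{corollary}) converts $e^{P(J^c_f)}$ into $e^{h^u_{\mathrm{top}}(f)}$. Simplicity then shows that the correctly defined $\mu^u$ spans the co-resonant space.

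Note also that in the paper the logical order is the reverse of yours. Theorem~\ref{main theo} is itself derived from Proposition~\ref{prop1st}: $\lambda^u(A)\in\mathrm{Res}_{d_s+d_c}(f)$ forces $\lambda^u(A)\le e^{P(J^c_f)}$.
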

By Theorem \ref{theores}, we can interpret the exponential of the unstable topological entropy as an eigenvalue for the action of $f$ on specially designed functional spaces (the anisotropic spaces). Moreover, the $u$-system $(\mu^{u}_x)_{x\in \mathbb T^d}$ identifies with the corresponding (dual) eigenfunction and the $u$-MME $\mu_{J^c_f}$ can be read as the trace of the spectral projector.  

Theorem \ref{theores} recovers \cite[Theorem 3]{Humu} for diffeomorphisms of the $3$-torus and generalizes the result to higher dimensional tori when $f$ is in the connected component of its action on homology.
\subsection{Applications}
We record some consequences of the existence of a strong-unstable Margulis system.

 In a recent survey \cite{Ta}, Tahzibi asked whether for an Anosov diffeomorphism which is partially hyperbolic with expanding center, a measure of maximal $u$-entropy is necessarily a measure of maximal entropy (or MME). In \cite[Corollary 1.1]{Humu}, the author proved, using \cite{GaSh}, that when $d=3$, the $u$-MME $\mu_{J^c_f}$ is a MME if and only if $E_u$ and $E_s$ are jointly integrable. Moreover, this condition fails for a $C^1$-open and $C^\infty$-dense subset of diffeomorphisms.
 
For higher dimensional tori, we can show the following result.  
\begin{corollary}
\label{cor1}
Let $A\in \mathrm{SL}(d,\mathbb Z)\cap \mathcal A^\infty_{d_s,d_c,d_u}(\mathbb T^d)$ satisfying Assumption \ref{assump}. Then 
$$\{f\in \mathcal A^\infty_A(\mathbb T^d)\mid \mu_{J^c_f} \text{ is not a MME for } f\}\subset \mathcal A^\infty_A(\mathbb T^d) $$
is a $C^1$-open and $C^\infty$-dense subset.
\end{corollary}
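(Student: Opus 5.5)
Since $f$ is Anosov, it has a unique measure of maximal entropy $\mu_0$, namely the equilibrium state of the constant potential. Recall also the general fact for Anosov diffeomorphisms: two Hölder potentials have the same equilibrium state if and only if they are cohomologous up to an additive constant. So $\mu_{J^c_f}$ is a MME if and only if $J^c_f$ is cohomologous to a constant. By Theorem \ref{main theo}, $P(J^c_f)=h^u_{\mathrm{top}}(f)=\log\lambda^u(A)$, so that constant must be $\log\lambda^u(A)-h_{\mathrm{top}}(f)=\log\lambda^u(A)-\log\lambda^{cu}(A)=-\log\lambda^c(A)$. Here $h_{\mathrm{top}}(f)=h_{\mathrm{top}}(A)$ because $f$ is conjugate to $A$ by Franks--Manning, and $\lambda^c(A)$ is the determinant of $A$ on $E^A_c$. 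By Livšic's theorem, the condition therefore becomes a periodic-data condition:
\[
\mu_{J^c_f}\text{ is a MME}\iff \forall p\in\mathrm{Fix}(f^n),\quad |\det(df^n_p|_{E_c})|=\lambda^c(A)^n .
\]
The plan is to prove that the complement $\mathcal B$ of the set in Corollary \ref{cor1} (the $f$ for which this rigidity holds) is $C^1$-closed with empty $C^\infty$-interior in $\mathcal A^\infty_A(\mathbb T^d)$.

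\textbf{Openness.} For $C^1$-openness I will show $\mathcal B$ is $C^1$-closed. Take $f_k\to f$ in $C^1$ with $f_k\in\mathcal B$. Any periodic point $p$ of $f$ is hyperbolic, so it has a continuation $p_k$ of the same period for $f_k$. The bundles $E_c$ depend continuously on the map in $C^1$, so $\det(df^n_{k}|_{E_c})(p_k)\to\det(df^n|_{E_c})(p)$. The identity $|\det|=\lambda^c(A)^n$ passes to the limit, and $\lambda^c(A)$ is fixed because the isotopy class is fixed. Hence $f\in\mathcal B$.

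\textbf{Density.} Given $f\in\mathcal B$, I will perturb it in $C^\infty$ near a fixed point $p$, which exists by Lefschetz since $f$ is conjugate to $A$. The perturbation is $f_t=f\circ\Phi_t$, where $\Phi_t$ is supported in a small ball around $p$, fixes $p$, and has $d\Phi_t(p)$ a small dilation on $E_c(p)$ while preserving the splitting at $p$. Then $|\det(df_t|_{E_c})(p)|=e^{t}\lambda^c(A)\neq\lambda^c(A)$ for small $t\ne0$. For $t$ small, $f_t$ stays in $\mathcal A^\infty_A$, since strong partial hyperbolicity is open and the straight path $t\mapsto f_t$ lies in the connected component. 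So $f_t\notin\mathcal B$, and $f_t$ converges to $f$ in $C^\infty$.

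\textbf{Main obstacle.} The delicate step is the first reduction: identifying the constant exactly and justifying $h_{\mathrm{top}}(f)=\log\lambda^{cu}(A)$. Once that is in place, the Livšic criterion reduces everything to the elementary perturbation and continuity arguments above.
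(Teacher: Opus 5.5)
Your proposal is correct and follows essentially the same route as the paper: you reduce, via the equilibrium-state/cohomology criterion and Livšic, to the periodic-data condition $|\det(df^n_p|_{E_c})|=\lambda^c(A)^n$, with the constant fixed by $P(J^c_f)=\log\lambda^u(A)$ and $h_{\mathrm{top}}(f)=\log\lambda^u(A)+\log\lambda^c(A)$; you get $C^1$-openness from hyperbolic continuation of periodic points; and you get $C^\infty$-density from a local splitting-preserving perturbation (you use a fixed point rather than a general periodic point, which slightly simplifies the computation). The one point to add, which the paper states explicitly, is that every $f\in\mathcal A^\infty_A(\mathbb T^d)$ inherits Assumption \ref{assump} from $A$; this is needed to apply Theorem \ref{main theo} to each such $f$.
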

We are not able to generalize the characterization obtained in \cite[Corollary 1.1]{Humu} in full generality. However, using the recent work of Gogolev-Shi \cite[Theorem 1.3]{GS23}, we can show the following.
\begin{corollary}
\label{cor2}
Let $A\in \mathrm{SL}(d,\mathbb Z)\cap \mathcal A^\infty_{d_s,1,d_u}(\mathbb T^d)$ with a $1$-dimensional center bundle. Suppose furthermore that $A$ is generic and satisfies Assumption \ref{assump}. Then there is a $C^1$-neighborhood $\mathcal U$ of $A$ in $\mathcal A^\infty_{d_s,1,d_u}(\mathbb T^d)$ such that 
$$\forall f\in \mathcal U,\quad \mu_{J^c_f} \text{ is a MME for } f \iff E_u \text{ and } E_s \text{ are jointly integrable.} $$
\end{corollary}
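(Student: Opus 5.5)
The plan is to combine Theorem \ref{main theo}, which gives $\mu_{J^c_f}\in\mathcal M^u(f)$ with a $cs/u$ product structure built from the Margulis systems, with the rigidity result of Gogolev--Shi \cite[Theorem 1.3]{GS23} for perturbations of generic linear maps with one-dimensional center.

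\emph{The easy direction ($\Leftarrow$).} Assume $E_u$ and $E_s$ are jointly integrable for $f\in\mathcal U$. Since $d_c=1$, \cite[Theorem 1.3]{GS23} should apply on a small $C^1$-neighborhood $\mathcal U$ of the generic $A$. I expect it to show that joint integrability of $E_s\oplus E_u$ forces $f$ to be smoothly (or at least $C^1$ along the center) conjugate to $A$, or more weakly that the center Jacobian $J^c_f$ is cohomologous to the constant $-\log|\lambda^c(A)|$. In either case $\mu_{J^c_f}$ is the equilibrium state of a potential cohomologous to a constant, so it equals the MME $\mu_{0}$ of $f$. If \cite{GS23} only yields a continuous conjugacy $h$ that is smooth along $\mathcal W^{su}$, I will instead argue through Theorem \ref{main theo}. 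Joint integrability makes $cs$-holonomy between strong unstable plaques coincide with $s$-holonomy inside $\mathcal W^{cu}$-leaves. Then the $u$-Margulis system, glued with the center Margulis measures, gives a Margulis system on $\mathcal W^{cu}$ with expansion factor $e^{h_{\mathrm{top}}(f)}$. Uniqueness of the MME for the Anosov map $f$ then identifies $\mu_{J^c_f}$ with the Bowen--Margulis measure.

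\emph{The harder direction ($\Rightarrow$).} Suppose $\mu_{J^c_f}$ is the MME $\mu_0$. Both measures are equilibrium states for the Anosov diffeomorphism $f$, of the potentials $J^c_f$ and $0$ respectively. By Livšic, $J^c_f-P(J^c_f)$ is then cohomologous to $-h_{\mathrm{top}}(f)$. Hence every periodic orbit of period $n$ has center Jacobian $e^{n(h_{\mathrm{top}}(f)-P(J^c_f))}$, which is constant in the orbit. By \eqref{eq:equality}, $e^{P(J^c_f)}=\lambda^u(A)$. Since $f$ is conjugate to $A$ by Franks--Manning, $h_{\mathrm{top}}(f)=h_{\mathrm{top}}(A)=\log(\lambda^u(A)|\lambda^c(A)|)$. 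So the periodic center exponents of $f$ all equal $\log|\lambda^c(A)|$, the center exponent of $A$. The plan is to feed this periodic data into \cite[Theorem 1.3]{GS23}. I expect that theorem to say that, for $f$ $C^1$-close to a generic $A$ with one-dimensional center, having the center periodic data of $A$ is equivalent to joint integrability of $E_s$ and $E_u$.

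\emph{Main obstacle.} The crux is matching hypotheses exactly. I must check that \cite{GS23}'s conclusion is stated in terms of center periodic data, or of the center Jacobian being cohomologous to a constant. I must also confirm that the neighborhood $\mathcal U$ there is $C^1$, and that genericity of $A$ (irreducibility, distinct moduli of eigenvalues) is what they need. If their statement is phrased via the conjugacy $h$ to $A$ preserving the center foliation, I will first deduce $h$ is $C^{1}$ along $\mathcal W^c$ from the constant periodic center exponents. The deduction is Livšic-type regularity along the one-dimensional center, and I then apply their result.
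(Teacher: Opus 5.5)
Your main argument is correct and is essentially the paper's proof. $\mu_{J^c_f}$ is the MME iff $J^c_f$ is cohomologous to a constant; Theorem \ref{main theo} together with $h_{\mathrm{top}}(f)=h_{\mathrm{top}}(A)$ forces that constant to be $-\log\lambda^c(A)$; and \cite[Theorem 1.3]{GS23}, applied to $f^{-1}$ (for which $E_c$ is weakly contracted) on a $C^1$-neighborhood small enough that every $f$ in it lies in $\mathcal A^\infty_A(\mathbb T^d)$ and satisfies Assumption \ref{assump}, says exactly that matching the center periodic data of $A$ is equivalent to joint integrability of $E_s$ and $E_u$, so your fallback routes through a Margulis system on $\mathcal W^{cu}$ or center regularity of the conjugacy are not needed.
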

The precise meaning of generic (which will not be used in this paper) can be found in the introduction of \cite{GS23}.

Another important class of invariant measures is given by \emph{$u$-Gibbs measures}. They correspond to invariant measures whose conditionals along strong-unstable manifolds are absolutely continuous with respect to the Lebesgue measure. Equivalently, they correspond to $u$-equilibrium states associated to the \emph{strong-unstable Jacobian}:
$$
J^u_f\in C^{\alpha}(\mathbb T^d,\mathbb R),\ \forall x\in \mathbb T^d, \quad J_f^u(x):=-\log| \mathrm{det}(df_x|_{E_u(x)})|,
$$ see \cite[Theorem C]{HuWuZh} for more details. Using \cite[Theorem A]{GS25}, we obtain the following result.
\begin{corollary}
\label{cor3}
Let $f\in\mathcal A^\infty_{d_s,d_c,d_u}(\mathbb T^d)$ satisfying Assumption \ref{assump} such that $\mathcal W^u$ is minimal. Suppose that:
 \begin{itemize}
 \item either $d=3$;
\item or $f\in A^\infty_A(\mathbb T^d)$ for some $A\in \mathrm{SL}(d,\mathbb Z)\cap \mathcal A^\infty_{d_s,d_c,d_u}(\mathbb T^d)$.
 \end{itemize}
Suppose that $\mu_{J^c_f}$ is a $u$-Gibbs measure. Then $J^u_f$ is cohomologous to $-\log(\lambda^u(A))$.
\end{corollary}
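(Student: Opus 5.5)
The plan is to combine Theorem \ref{main theo} with the rigidity result \cite[Theorem A]{GS25}. First, I would show that if $\mu_{J^c_f}$ is a $u$-Gibbs measure, then the Margulis system $(\mu^u_x)_{x\in\mathbb T^d}$ is absolutely continuous with respect to leafwise Lebesgue measure $m^u_x$ on strong-unstable leaves. By the final statement of Theorem \ref{main theo}, $\mu_{J^c_f}\in\mathcal M^u(f)$ has conditionals along $\mathcal W^u$ given, up to rescaling, by $\mu^u_x$ for $\mu_{J^c_f}$-a.e. $x$. Being $u$-Gibbs, these conditionals are also absolutely continuous with positive continuous densities; more precisely, they are the standard Gibbs $u$-densities $\rho_x$ built from $J^u_f$. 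Hence $\mu^u_x=c(x)\rho_x\, m^u_x$ on a full-measure set.

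To promote this to every leaf, I would use two facts. First, $\mu_{J^c_f}$ has full support, since it is an equilibrium state of an Anosov map. Second, the Margulis system is $cs$-holonomy invariant by \eqref{eq:holhol}, and continuous in $x$. Together with minimality of $\mathcal W^u$, these give that $\mu^u_x$ is equivalent to Lebesgue on every leaf, with a continuous positive density.

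Next I would compare the conformality relations. The Margulis system satisfies $f^*\mu^u_x=e^{h^u_{\mathrm{top}}(f)}\mu^u_{f^{-1}x}$, while Lebesgue satisfies $f^*m^u_x=|\det df|_{E_u}|\,m^u_{f^{-1}x}$. Writing $\mu^u_x=\psi\, m^u_x$ therefore yields a coboundary relation of the form
$$-J^u_f=\log\lambda^u(A)+\log\psi\circ f^{-1}-\log\psi$$
along leaves, using $e^{h^u_{\mathrm{top}}}=\lambda^u(A)$ from \eqref{eq:equality}. What remains is to check that $\psi$ can be chosen as a globally defined continuous (Hölder) function on $\mathbb T^d$, rather than only a leafwise density defined up to a leaf-dependent constant.

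This last step is where \cite[Theorem A]{GS25} enters, and it is the main obstacle. That theorem presumably states that, under minimality of $\mathcal W^u$, the existence of a holonomy-invariant $u$-system (or a $u$-Gibbs measure that is also a $u$-MME, i.e. a $u$-equilibrium state for a constant potential) forces $J^u_f$ to be cohomologous to a constant. I would verify its hypotheses by using the holonomy invariance and conformality of $(\mu^u_x)$ from Theorem \ref{main theo} to normalize the leafwise densities consistently. Finally, the constant is identified as $-\log\lambda^u(A)$: integrate against $\mu_{J^c_f}$ and use that a $u$-Gibbs measure satisfies $h^u_\mu=-\int J^u_f\,d\mu$, which here equals $h^u_{\mathrm{top}}(f)=\log\lambda^u(A)$.
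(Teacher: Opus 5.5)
Your conclusion is right, and your argument ends on the same ingredient as the paper. The paper's proof is a single step: Theorem~\ref{main theo} (the $u$-MME $\mu_{J^c_f}$ with a continuous, conformal, $\mathrm{Hol}^{cs}$-invariant Margulis system) plus minimality of $\mathcal W^u$ puts $f$ in the setting of \cite[Theorem A]{GS25}. That theorem gives: $\mu_{J^c_f}$ is $u$-Gibbs if and only if $f$ has constant unstable periodic data, i.e.\ $J^u_f$ is cohomologous to a constant. Your identification of the constant through $-\int J^u_f\,d\mu_{J^c_f}=h^u_{\mu_{J^c_f}}(f)=h^u_{\mathrm{top}}(f)=\log\lambda^u(A)$ is correct, and makes explicit a point the paper leaves implicit. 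Be careful with your guess of what \cite[Theorem A]{GS25} says, though. Your first version (minimality plus a holonomy-invariant $u$-system forces $J^u_f$ to be cohomologous to a constant) cannot be right. Every $f$ covered by Theorem~\ref{main theo} has such a system, yet the maps $f_\psi$ of \S\ref{sec:last} in general do not have constant unstable periodic data. It is the $u$-Gibbs property of $\mu_{J^c_f}$ that must be fed in.

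More to the point, the step you single out as the main obstacle is not one, and your first steps already prove the corollary without \cite{GS25}. The Margulis system is not defined up to leaf-dependent constants. It is one fixed family of measures on entire leaves (on the cover, $\mu^u_{\tilde x}=(Q_{\tilde x}^{-1})_*\mathrm{Leb}_{E^A_u}$), satisfying \eqref{eq:conf} with an exact constant. So once every $\mu^u_x$ is absolutely continuous with continuous density, $\psi(y):=\frac{d\mu^u_y}{dm^u_y}(y)$ is a genuine function on $\mathbb T^d$.

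The limiting argument you already need, to pass from $\mu_{J^c_f}$-a.e.\ plaque to every plaque, also shows that $\psi$ is continuous. Good points are dense since $\mu_{J^c_f}$ has full support; test against a fixed $\phi\geq 0$ in a foliation chart, using continuity of the system, of the plaques, and of the standard densities $\rho_x$ normalized by $\rho_x(x)=1$. Moreover $\psi>0$ because each $\mu^u_x$ has full support.

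Now compare $f^*\mu^u_x=\lambda^u(A)\mu^u_{f^{-1}x}$ with $f^*m^u_x=|\det(df|_{E_u})|\,m^u_{f^{-1}x}$. This gives $(\psi\circ f)\,|\det(df|_{E_u})|=\lambda^u(A)\,\psi$ at every point, i.e.\ $-J^u_f=\log\lambda^u(A)+\log\psi-\log\psi\circ f$. (Your displayed relation has the coboundary term composed with $f^{-1}$, a harmless slip.) So the Birkhoff sums of $J^u_f+\log\lambda^u(A)$ vanish on all periodic orbits, and the Livsic theorem yields a H\"older transfer function.

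This route uses neither \cite{GS25} nor minimality of $\mathcal W^u$; full support of the equilibrium state does the job you assigned to minimality. It also reads off the constant directly from the conformality factor. The paper's citation is shorter and embeds the statement in the general equivalence of \cite{GS25}.
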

When $d=3$, the minimality of $\mathcal W^u$ for any $f\in \mathcal A^\infty_{1,1,1}(\mathbb T^3)$ was announced by Avila-Crovisier-Eskin-Potrie-Wilkinson-Zhang. In this setting, Corollary \ref{cor3} was already obtained in \cite[Theorem C]{GS25}. In higher dimension, the minimality of the strong-unstable foliation is a difficult and active field of research and we refer to \cite{ACW25} for more details. In particular, \cite[Theorem B]{ACW25} shows that if $f\in \mathcal A^\infty_{d_s,d_c,d_u}(\mathbb T^d)$ has minimal strong-unstable foliation and $E_u$ and $E_s$ are \emph{not} jointly integrable, then $f$ is \emph{robustly minimal}, that is, it admits a $C^1$-neighborhood of diffeomorphisms for which the strong-unstable foliation remains minimal.

We also note that by \cite[Proposition 6]{Gog08}, if $A\in \mathrm{SL}(d,\mathbb Z)\cap \mathcal A^\infty_{d_s,d_c,d_u}(\mathbb T^d)$ is irreducible with a simple and real leading eigenvalue, then $E_u^A$ is dense. We explain in \S \ref{sec:last} how to obtain non-linear examples of $f\in\mathcal A^\infty_{d_s,d_c,d_u}(\mathbb T^d)$ having minimal strong-unstable foliation from such a CAT map.

\subsection{Comparison with earlier works}
\label{sec:comp}
As explained above, Theorems \ref{main theo} and \ref{theores} generalize the results of \cite{Humu} to higher dimensional tori for any $f\in \mathcal A^\infty_A(\mathbb T^d)$, where $A\in \mathrm{SL}(d,\mathbb Z)\cap \mathcal A^\infty_{d_s,d_c,d_u}(\mathbb T^d)$. As such, the current paper fits into the general program of extending the construction of Margulis systems from the uniformly hyperbolic setting to some partially hyperbolic systems. These results are also new instances of partially hyperbolic systems for which some of the objects of thermodynamic formalism can be retrieved from the spectral theory associated to $f$. We refer to \cite[\S 1.5]{Humu} for a review of the existing literature on these subjects as well as for a comparison with other existing works in the field. 

In the rest of this section, we explain how our current work differs from the earlier paper \cite{Humu}. The general philosophy of the two papers are the same, namely, we construct dynamically a $u$-Margulis system which we interpret as a Pollicott--Ruelle coresonant state for the action of $f$ on $(d_s+d_c)$-forms. Analyzing the first resonance for this action using analytical tools, we show the equality between the unstable entropy and pressure on the center Jacobian. This in turn shows that the associated equilibrium state is a $u$-MME for the partially hyperbolic diffeomorphism.

In \cite{Humu}, we used the \emph{quasi-isometry} of the strong-unstable foliation proved when $d=3$ in \cite{Po} to construct a first $u$-system using a compactness argument \emph{à la} Margulis. In order to construct a measurable $u$-system from this first system, we used ideas coming from dimension theory \cite{CPZ20}. We note that recently, the work \cite{BSYY26} constructed Margulis-systems for diffeomorphisms satisfying a condition called \emph{homogeneous exponential growth}. This property is the one shown for any $f\in \mathcal A^\infty_{1,1,1}(\mathbb T^3)$ in \cite[Proposition 3.1]{Humu}. While the two constructions are different, they both rely on the fact that the strong-unstable foliation is $1$-dimensional, i.e., that $d_u=1$. Moreover, establishing the quasi-invariance by $cs$-holonomy of the $u$-system was the main difficulty of \cite{Humu}. In that paper, we constructed from the spectral theory the $cs$-Margulis system as the restriction of the resonant state. We then used that $E_u$ was absolutely continuous to deduce that the $cs$-system was holonomy invariant. Finally, we deduced the holonomy invariance of the $u$-system using a Fubini-type argument.

In the present paper, we give a much more direct construction of the $u$-system which has several advantages as we explain below. In Proposition \ref{prop:umarg}, we construct a holonomy-invariant $u$-system directly, using the conjugacy to the linear model. The new idea is to work on the universal cover and use that any $f\in \mathcal A^\infty_{A}(\mathbb T^d)$ is \emph{dynamically coherent}, has a \emph{global $cs/u$-product structure} and is \emph{center-fibered}, see \cite{FPS}.  In particular, the argument does \emph{not} rely on quasi-invariance of the strong-unstable foliation and hence works when $d_u\geq 2$. Theorems \ref{main theo} and \ref{theores} extend with the same proofs to any $f\in \mathcal A^\infty_{d_s,d_c,d_u}(\mathbb T^d)$ having the three properties stated above.

The analytical argument of \cite[Section 4]{Humu} used the dynamical zeta function and some of the proofs there used that we worked on the $3$-dimensional torus. In \S \ref{sec:resona} we streamline this argument to make it work in any dimension. Finally, the $cs$-Margulis system could be constructed from the resonant state using the same argument as in \cite[\S 5.1]{Humu}. Since we already have a holonomy-invariant Margulis $u$-system $(\mu^u_x)_{x\in \mathbb T^d}$, we present instead in \S \ref{sec:cs} a more direct and more elementary construction of the $cs$-system directly from $(\mu^u_x)_{x\in \mathbb T^d}$ and $\mu_{J^c_f}$.
\subsection{Organisation of the paper} In \S \ref{sec:diffeo}, we define $ \mathcal A^\infty_{A}(\mathbb T^d)$ for $A\in \mathrm{SL}(d,\mathbb Z)\cap \mathcal A^\infty_{d_s,d_c,d_u}(\mathbb T^d)$ and recall the definitions of {dynamical coherence}, of having a {global $cs/u$-product structure} and being {center-fibered} which will be used in the proofs of the main theorems. In \S \ref{sec:system}, we recall the definition of systems of measures and prove the inequality $P(J^c_f)\leq h^u_{\mathrm{top}}(f)$ in Lemma \ref{lemm:ineq}.

In \S \ref{sec:Ruelle}, we study the Pollicott--Ruelle spectrum of $f$ acting on $(d_s+d_c)$-forms and show, using a trace formula, that the first resonance is simple and located at $e^{P(J^c_f)}$. 

In \S \ref{sec:usystem}, we construct a honolomy-invariant $u$-system with dilatation factor $\lambda^u(A)$, see Proposition \ref{prop:umarg}. We then show that the $u$-system defines a coresonant state, see Lemma \ref{lemmWF}. In particular, this shows that $\lambda^u(A)\leq e^{P(J^c_f)}$ from which we deduce that $\mu_{J^c_f}$ is a $u$-MME, see Corollary \ref{corollary}.

In \S \ref{sec:cs}, we construct the $cs$-Margulis system directly from the $u$-system and the $u$-MME, see Proposition \ref{prop:cs}.

Finally in \S \ref{sec:last}, we prove Corollaries \ref{cor1}, \ref{cor2} and \ref{cor3}.
\\
\textbf{Acknowledgements.} We would like to thank Clark Butler, Sylvain Crovisier, Colin Guillarmou, Thibault Lefeuvre, Rafael Potrie and Amie Wilkinson for discussions related to this project. More precisely, we would like to thank Rafael Potrie for pointing \cite{FPS} to our attention and answering some questions on the quasi-isometry of foliations.

We acknowledge that the idea of the construction in Proposition \ref{prop:cs} as well as some of the examples given in \S \ref{sec:last} were suggested during discussions with ChatGPT 5.6 Sol.  The rest of the paper has no AI input and all the proofs (including that of Proposition \ref{prop:cs}) were written by the author. The author bears full responsibility for their correctness.

This research was supported by the European Research Council (ERC) under
the European Union’s Horizon 2020 research and innovation programme (Grant agreement no. 101162990 — ADG).
\section{Preliminaries}
\subsection{Anosov diffeomorphisms with a partially hyperbolic splitting}
\label{sec:diffeo}
\subsubsection{Definition}
Let $M$ be a closed and smooth manifold and let $f\in \mathrm{Diff}^\infty(M)$ be a smooth diffeomorphism on $M$. Suppose that there is a continuous and $df$-invariant splitting of the tangent bundle $TM=E_s\oplus E_c\oplus E_u$ a norm $\|.\|$ and $\lambda,\mu>1$, such that for any $x\in M$ and any unit vector $v_\bullet(x)\in E_\bullet(x)$, for $\bullet=s,c,u$, we have
\begin{enumerate}
\item $\mu \|df_x(v_s)\|<\|df_x(v_c)\|<\mu^{-1}\|df_x(v_u)\|,$
\item $\|df_x(v_s)\|<\lambda^{-1}<1<\lambda < \|df_x(v_c)\|\leq \|df_x(v_u)\|.$
\end{enumerate}
The first point means that $f$ is a (pointwise) partially hyperbolic diffeomorphism while the second one means that $f$ is \emph{Anosov} with splitting $TM=E_s\oplus E_{cu}$ where $E_{cu}:=E_c\oplus E_u$. We will call $E_s$ (resp. $E_c$) the stable (resp. center) bundle of $f$. The bundle $E_{u}$ will be referred to as the strong-unstable bundle of $f$ while the bundle $E_{cu}$ will be referred to as the unstable bundle of $f$. Let $d_\bullet:=\mathrm{dim}(E_\bullet)$ for $\bullet=s,c,u$ and let $\mathcal A^{\infty}_{d_s,d_c,d_u}(M)$ denote the set of such diffeomorphisms. This set is easily seen to be $C^1$-open and examples are provided by linear Anosov automorphisms $A\in \mathrm{SL}(d,\mathbb Z)$ with a corresponding splitting of its (generalized) eigenspaces.
\subsubsection{Invariant manifolds}
\label{sec:mani} Let $f\in \mathcal A^{\infty}_{d_s,d_c,d_u}(M)$. Since $f$ is Anosov, $E_s$ and $E_{cu}$ integrate to $f$-invariant foliations $\mathcal W^s$ and $\mathcal W^{cu}$ called respectively the stable and unstable foliations of $f$, see \cite[Theorem 6.2.3]{HK}. Moreover, since $E_u$ is the strong-unstable bundle of a partially hyperbolic diffeomorphism, it integrates to a $f$-invariant foliation $\mathcal W^u$ called the strong-unstable foliation of $f$, see \cite[Theorem 4.1]{Pe}.

The existence of a $f$-invariant foliation $\mathcal W^{cs}$ tangent to the center-stable bundle $E_{cs}:=E_c\oplus E_s$ is a subtler problem. If such a foliation exists, we say that $f$ is \emph{dynamically coherent}. Note that in this case, the collection of leaves $\mathcal W^c(x):=\mathcal W^{cs}(x)\cap \mathcal W^{cu}(x)$ for $x\in \mathbb T^d$ defines the center foliation which integrates the center bundle $E_c$. Dynamical coherence for partially hyperbolic diffeomorphisms on tori is known in the following cases:
\begin{itemize}
\item if $M=\mathbb T^3$, see \cite{BBI,Po};
\item if $M=\mathbb T^d$ and $f$ is in the connected component (in the space of partially hyperbolic diffeomorphisms) of a linear automorphism $A\in \mathrm{SL}(d,\mathbb Z)$, see \cite[Main theorem]{FPS}.
\end{itemize}
For any $\bullet=s,c,u,cs,cu$, any $x\in M$ and any $\delta>0$, let $\mathcal W^\bullet(x,\delta):=\{y\in \mathcal W^\bullet(x) \mid d^\bullet(x,y)<\delta\}$, where $d^\bullet$ is the induced distance on the leaf.
\subsubsection{Local product structure}
\label{sec:product} Suppose that $f$ is dynamically coherent.
By transversality of the foliations $\mathcal W^{cs}$ and $\mathcal W^u$, there is an $\eps_0>0$ and a $C>0$ such that 
\begin{equation}
\label{eq:Bowen}\forall x,y\in \mathbb T^d, \ d(x,y)<\eps<\eps_0  \ \Rightarrow \ \mathcal W^{cs}(x,C\eps)\cap \mathcal W^u(y,C\eps)=\{[x,y]_{cs,u}\},
\end{equation}
where $[x,y]_{cs,u}$ is the \emph{Bowen bracket} of $x$ and $y$.
\begin{definition}
\label{defi:Rect}
A closed set $R\subset \mathbb T^d$ is called a \emph{rectangle} if for every $x,y\in R$, the Bowen bracket $[x,y]_{cs,u}$ exists and $[x,y]_{cs,u}\in R$.
\end{definition}
Rectangles exist because of the local product structure, see \cite[Eq. $(2.2)$]{CPZ20}.
We now define the center-stable and unstable holonomies. Let $R$ be a rectangle and $x\in  R$. We will denote by 
$\mathcal W^{\bullet}_R(x),$ for $ \bullet =cs,u,$
the connected component of $\mathcal W^{\bullet}(x)\cap R$ containing $x$.
For $x,y\in R$, the \emph{strong-unstable holonomy} is 
\begin{equation}
\label{eq:holu}
\mathrm{Hol}^u_{x,y}: \mathcal W^{cs}_R(x)\to \mathcal W^{cs}_R(y),\quad \mathrm{Hol}^u_{x,y}(z):=[y,z]_{cs,u}\in \mathcal W^{cs}(y)\cap \mathcal W^u(z).
\end{equation}
The \emph{center-stable holonomy} is 
\begin{equation}
\label{eq:holcs}
\mathrm{Hol}^{cs}_{x,y}: \mathcal W^{u}_R(x)\to \mathcal W^{u}_R(y),\quad \mathrm{Hol}^{cs}_{x,y}(z):=[z,y]_{cs,u}\in \mathcal W^{cs}(z)\cap \mathcal W^u(y).
\end{equation}
\subsubsection{Lift to the universal cover}
\label{sec:univ}
Let $A\in \mathrm{SL}(d,\mathbb Z)$ be a linear Anosov automorphism of $\mathbb T^d$, which admits a dominated splitting of the form $\mathbb R^d=E_s^A\oplus E_c^A\oplus E_u^A$, that is $A\in \mathcal A^\infty_{d_s,d_c,d_u}(\mathbb T^d)$, where $d_\bullet:=\mathrm{dim}(E_\bullet^A)$. Let $f\in \mathcal A^\infty_{d_s,d_c,d_u}(\mathbb T^d)$ be isotopic to $A$. 

Note that $A$ coincides with the action on homology of $f$. Recall that there exists a Hölder continuous homeomorphism $h:\mathbb T^d\to \mathbb T^d$  such that
$ h\circ f=A\circ h, $
see for instance \cite[Theorem 18.6.1]{HK}.  Let $H$, $F$ and $A$ denote a lift to $\mathbb R^d$ of $h,f$ and $A$ respectively. For a foliation $\mathcal N$ of $ \mathbb T^d$, we denote by $\widetilde{\mathcal N}$ the lifted foliation of $\mathbb R^d$. Then one has: 
\begin{itemize}
\item the conjugacy lifts to $\mathbb R^d$, that is,
\begin{equation}
\label{eq:conj2}
H\circ F=A\circ H; \quad \forall \tilde x\in \mathbb R^d,\forall \gamma\in \mathbb Z^d, \quad H(\tilde x+\gamma)=H(\tilde x)+\gamma;
\end{equation}
\item  for any $\bullet=s,cu$, one has
\begin{equation}
\label{eq:foliations}
\forall \tilde x\in \mathbb R^d,\quad H\big(\widetilde{ \mathcal W_f^\bullet}(\tilde x)\big)=\widetilde{\mathcal W^\bullet_A}(H(\tilde x));
\end{equation}
\end{itemize}
For a dynamically coherent $f\in \mathcal A^\infty_{d_s,d_c,d_u}(\mathbb T^d)$, we say that $f$ is \emph{center-fibered} if 
\begin{equation}
\label{eq:cfibered}
\forall \tilde x\in \mathbb R^d,\quad H\big(\widetilde{ \mathcal W_f^{c}}(\tilde x)\big)=\widetilde{\mathcal W^c_A}(H(\tilde x)),\quad H\big(\widetilde{ \mathcal W_f^{cs}}(\tilde x)\big)=\widetilde{\mathcal W^{cs}_A}(H(\tilde x)).
\end{equation}
\subsubsection{Global product structure}
\label{sec:globalproduct}
 We say that $f$ has a \emph{global $cs/u$-product structure} if
\begin{equation}
\label{eq:globalproduct}
\forall \tilde x,\tilde y\in \mathbb R^d,\quad  \widetilde{ \mathcal W_f^{cs}}(\tilde x) \text{ and } \widetilde{ \mathcal W_f^{u}}(\tilde y) \text{ intersect at a single point.}
\end{equation}
In this case, the intersection point is denoted by $[\tilde x,\tilde y]_{cs,u}$ and is called the \emph{Bowen bracket} of $x$ and $y$. We can define as in Definition \ref{defi:Rect} the notion of a rectangle $R\subset \mathbb R^d$. Moreover, for any $\tilde x,\tilde y\in \mathbb R^d$, we can define the strong-unstable (resp. center-stable) holonomy $\mathrm{Hol}^u_{\tilde x,\tilde y}: \widetilde{\mathcal W^{cs}}(\tilde x)\to  \widetilde{\mathcal W^{cs}}(\tilde y)$ (resp. $\mathrm{Hol}^{cs}_{\tilde x,\tilde y}: \widetilde{\mathcal W^{u}}(\tilde x)\to  \widetilde{\mathcal W^{u}}(\tilde y)$) by \eqref{eq:holu} (resp. \eqref{eq:holcs}). 

Denote by $\mathcal A^\infty_A(\mathbb T^d)$ the set of $f\in \mathcal A^\infty_{d_s,d_c,d_u}(\mathbb T^d)$ which are in the connected component of $A$ in the space of partially hyperbolic diffeomorphisms. We will use the following result due to Fisher, Potrie and Sambarino \cite[Theorem A, Theorem 4.1]{FPS}.
\begin{theorem}
\label{theo:FPS}
Let $A\in \mathrm{SL}(d,\mathbb Z)\cap \mathcal A^\infty_{d_s,d_c,d_u}(\mathbb T^d)$ and let  $f\in \mathcal A^\infty_A(\mathbb T^d)$. Then
\begin{itemize}
\item $f$ is dynamically coherent;
\item $f$ is center-fibered;
\item $f$ has global $cs/u$-product structure.
\end{itemize}
\end{theorem}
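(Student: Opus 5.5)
This statement is cited from Fisher--Potrie--Sambarino \cite[Theorem A, Theorem 4.1]{FPS}, so my plan is to reconstruct the structure of their argument rather than redo it in detail. The first step is to lift everything to $\mathbb R^d$ and compare with the linear model $A$. Along the isotopy $(f_t)_{t\in[0,1]}$ inside the partially hyperbolic diffeomorphisms, with $f_0=A$, the lifts $F_t$ remain at bounded distance from $A$. By domination, the cone fields around $E^A_{cs}$ and $E^A_u$ then give uniform control of the large-scale geometry of the lifted foliations.

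The core of the argument is a continuation (open--closed) argument on $t\in[0,1]$. Let $J\subset[0,1]$ be the set of parameters $t$ for which:
\begin{itemize}
\item $f_t$ is dynamically coherent;
\item each lifted leaf $\widetilde{\mathcal W^{cs}_{f_t}}$ lies within bounded Hausdorff distance of a translate of $E^A_{cs}$;
\item each lifted leaf $\widetilde{\mathcal W^{u}_{f_t}}$ is \emph{quasi-isometric}, i.e. leaf distance is comparable to ambient distance, and is a graph over $E^A_u$ at large scale.
\end{itemize}
The set $J$ contains $0$, since for $A$ the leaves are affine planes.

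For openness, I would use that quasi-isometric strong-unstable foliations together with the Brin-type criterion give dynamical coherence, and that these properties are $C^1$-robust. For closedness, I would take a limit of $cs$-leaves using uniform bounds, which come from the fixed dominated cones and from the semiconjugacy $H_t$ being uniformly close to the identity.

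Once $J=[0,1]$, the remaining properties follow in turn.
\begin{itemize}
\item \emph{Global product structure.} A leaf $\widetilde{\mathcal W^{cs}}$ is a properly embedded graph over $E^A_{cs}$ and a leaf $\widetilde{\mathcal W^u}$ is a properly embedded graph over $E^A_u$. By an intersection-number or degree argument, together with transversality, they meet in exactly one point.
\item \emph{Center-fibering.} Since $H$ is at bounded distance from the identity, $H(\widetilde{\mathcal W^{cs}_f}(\tilde x))$ is contained in a bounded neighborhood of an $E^A_{cs}$-plane. It is also invariant under the appropriate iterates, and under iteration by $A$ the bounded deviations in the $E^A_u$ direction blow up. This forces the image to lie in $\widetilde{\mathcal W^{cs}_A}(H(\tilde x))$, and equality follows from the product structure. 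The center statement follows by intersecting with \eqref{eq:foliations} for $\bullet=cu$.
\end{itemize}

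The main obstacle is the closedness step. It needs uniform quasi-isometry of $\widetilde{\mathcal W^u_{f_t}}$ along the whole path, and pure cone arguments do not give this, because leaves may a priori turn back on themselves at large scale. I would first try to obtain it from the bounded distance of $F_t$ to $A$, which makes $u$-leaves advance linearly in the $E^A_u$ direction under iteration.
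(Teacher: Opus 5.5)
The paper gives no proof of this statement; it is quoted from \cite{FPS}, so the comparison is with that argument. Your outline has a genuine gap, and it sits exactly at the step you call the main obstacle.

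First, the uniformity you invoke is not available. Domination of $f_t$ gives invariant cones around $E^{f_t}_{cs}$ and $E^{f_t}_u$, not around $E^A_{cs}$ and $E^A_u$. Along an isotopy through partially hyperbolic maps the splitting may move arbitrarily far from the linear one (already $\phi A\phi^{-1}$ with $\phi$ isotopic to the identity shows this), so there are no fixed cones.

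Second, neither half of your open--closed scheme is justified for the property you continue. Quasi-isometry of $\widetilde{\mathcal W^u_{f_t}}$ is a property at infinity of a non-compact foliation. A $C^1$-perturbation moves every leaf, and continuity on compact pieces gives no large-scale control, so its $C^1$-robustness is only asserted. For closedness, limits of $cs$-leaves of $f_{t_n}$ give at best a branching family tangent to $E^{f_t}_{cs}$. Ruling out branching (or running Brin's argument for $f_t$) requires precisely the uniform quasi-isometry you leave open.

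Your suggested remedy does not provide it. For a $u$-disc $D$, $F^n(D)=H^{-1}A^nH(D)$ stays within bounded distance of $A^nH(D)$, which lies in an $E^A_{cu}$-plane. But no $C^0$ information tells you whether $H(D)$ spreads in the $E^A_u$ or in the $E^A_c$ direction. The claim that $u$-leaves ``advance in the $E^A_u$ direction'' is essentially \eqref{eq:Qhomeo}. The paper deduces that in the proof of Proposition \ref{prop:umarg} \emph{from} center-fibering \eqref{eq:cfibered} and the global product structure \eqref{eq:globalproduct}, i.e.\ from the conclusion you are trying to reach.

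The missing idea is to continue along the path a property that is open for trivial reasons. Take it to be the existence of a foliation $\mathcal F$ of $\mathbb T^d$, \emph{not} required to be invariant, which is transverse to $E^{f_t}_u$ and whose lifted leaves lie uniformly within bounded Hausdorff distance of translates of $E^A_{cs}$. This is the strong almost dynamical coherence used in \cite{FPS}.

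\begin{itemize}
\item \emph{Openness} is immediate, since the same $\mathcal F$ remains transverse to $E^g_u$ for $g$ $C^1$-close to $f_t$.
\item \emph{The substantive step} is to show that this property forces dynamical coherence with an invariant center-stable foliation which is itself almost parallel to $E^A_{cs}$. Pulling $\mathcal F$ back by $f^{-n}$ is the natural tool. Domination turns its tangent planes towards $E_{cs}$, while in $H$-coordinates $A^{-n}$ contracts the $E^A_u$-deviation of its leaves. This is also what makes $\pi^u_A\circ H$ constant on $cs$-leaves.
\item \emph{Closedness} is then free. If $f_n\to f$ and each $f_n$ has the property, then $\mathcal W^{cs}_{f_n}$ is almost parallel to $E^A_{cs}$. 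Since $E^{f_n}_{cs}\to E^f_{cs}$, it is transverse to $E^f_u$ for $n$ large, so $f$ has the property.
\end{itemize}

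In this scheme the large-scale behaviour of $\widetilde{\mathcal W^u}$ (product structure, quasi-isometry) is never an inductive hypothesis. It is derived at each parameter from the transverse foliation, so the obstacle you could not remove does not arise. Your final center-fibering step is fine once this is in place: iterate $A$ to kill $E^A_u$-deviations, using an almost-parallel constant uniform over leaves.
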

We also note that the conclusions of the theorem hold for any $f\in \mathcal A^\infty_{1,1,1}(\mathbb T^3)$ by results of Potrie \cite{Po}.

\subsection{Measures on the leaves}
\label{sec:system}
\subsubsection{System of measures}
Let $f\in \mathcal A^\infty_{d_s,d_c,d_u}(M)$ be dynamically coherent. In this paper, we construct \emph{systems of measures} supported on center-stable and strong-unstable leaves. We start with a definition.
\begin{definition}[System of measures]
\label{def}
For $\bullet=u,cs$, a system of  $\bullet$-measures $\mu^\bullet$ is a family $\mu^\bullet=(\mu^\bullet_x)_{x\in M}$ such that:
\begin{enumerate}
\item for any $x\in M$, $\mu^\bullet_x$ is a Borelian measure on $\mathcal W^\bullet(x)$ which is finite on compact subsets of $\mathcal W^\bullet(x)$;
\item for any $x,y\in M$ such that  $\mathcal W^\bullet(x)= \mathcal W^\bullet(y)$, one has $\mu^\bullet_x=\mu^\bullet_y$.
\end{enumerate}
We say that $\mu^\bullet$ is a measurable $($resp. continuous$)$ system of measures if $M$ is covered by foliation charts $B$ such that for any $\phi \in C_c^{\infty}(B,\mathbb R)$, the mapping $x\mapsto \mu^\bullet_x(\phi_{|\mathcal W^\bullet_{B}}(x))$ is measurable $($resp. continuous$)$. Here, $\mathcal W^{\bullet}_{B}(x)$ denotes the connected component of $\mathcal W^\bullet(x)\cap B$ containing $x$.
\end{definition}
We will say that a $u$-system of measures $\mu$ is \emph{$\mathrm{Hol}^{cs}$-invariant} if for any rectangle $R\subset M$ and any $p,q\in R$, we have $\mu_q=(\mathrm{Hol}^{cs}_{p,q})_*\mu_p$. 
\subsubsection{Conditional measures}
Following the initial idea of Margulis, the system of measures we will construct will be equal to the \emph{conditional measures} of the $u$-MME $\mu_{J^c_f}$. We do not recall the general definition of conditional measures and refer to \cite[\S 2.2.1]{Humu} for a more detailed discussion. We will only use that for any rectangle $R\subset M$, any probability measure $\mu$ on $R$ and $q\in R$, if we let 
\begin{equation}
\label{eq:tildemu}
\forall E\subset \mathcal W^{cs}_R(q),\quad \tilde \mu_q(E):=\mu(\cup_{y\in E}\mathcal W^{u}_R(y)),
\end{equation}
then there exists a system of $cs$-conditional measures  $(\mu^{u}_x)_{x\in R}$ such that 
\begin{itemize}
\item for any $x\in R$, $\mu^{u}_x$ is a probability measure on $\mathcal W^{u}_R(x)$,
\item if $\mathcal W^{u}_R(x)=\mathcal W^{u}_R(y)$, then $\mu^{u}_x=\mu^{u}_y$,
\item one has
\begin{equation}
\label{eq:CPZ}
\forall \phi\in L^1(R,\mu),\quad \mu(\phi)=\int_{\mathcal W^{cs}_R(q)}\int_{\mathcal W^{u}_R(y)}\phi(z)d\mu^{u}_y(z)d\tilde \mu_q(y),
\end{equation}
see for instance \cite[Equation $(2.6)$]{CPZ20}. 
\end{itemize}
One defines the \emph{$cs$-conditionals} $(\mu^{cs}_x)_{x\in R}$ of $\mu$ by replacing $u$ by $cs$ in the previous definition. Although this is not explicit in the notations, the conditionals  depend on the rectangle $R$. Note however that the ambiguity only consists in a normalizing constant, see \cite[Lemma 2.10]{CPZ20}.

\subsection{Unstable entropy}
\label{sec:entropy}
 Let $f\in \mathcal A^{\infty}_{d_s,d_c,d_u}(M)$ where  $M$ is a smooth and closed manifold. We will study  two notions of partial entropies associated to the strong-unstable bundle $E_u$ of $f$. The first one is the \emph{topological unstable entropy}. It was introduced by Saghin and Xia \cite{SaXi}, see also the work of Hu, Hua and Wu \cite{HuHuWu}. It is the exponential growth rate of the volume of strong-unstable balls.
\begin{equation}
\label{eq:hutop1}
 \forall \delta>0, \quad h^u_{\mathrm{top}}(f):=\sup_{x\in M}\limsup_{n\to+\infty}\frac 1n \log \mathrm{Vol}\big(f^n(\mathcal W^u(x,\delta))\big).
\end{equation}
The second notion of entropy is the \emph{metric unstable entropy} associated to $f$ and a $f$-invariant probability measure $\mu\in \mathcal P_f(M)$. We will denote it by $h_{\mu}^u(f)$ and we refer to \cite{HuHuWu} for a detailed discussion of its definition and properties. What will be used in this paper is the fact that the two notions of entropy are linked via the {variational principle} \eqref{eq:varia111}, see \cite[Theorem D]{HuHuWu}. Recall that the set of $u$-MMEs will be denoted by $\mathcal{M}^u(f)$.
\subsection{Center Jacobian}
In this subsection, we define the \emph{center Jacobian}. Let
\begin{equation}
\label{eq:Jc}
J^c_f\in C^{\alpha}(M,\mathbb R),\ \forall x\in M, \quad J_f^c(x):=-\log |\det(df_x|_{E_c(x)})|,
\end{equation}
for some $\alpha>0$.
By the thermodynamic formalism (see \cite[Theorem 20.3.7]{HK}), there is a unique \emph{equilibrium state} $\mu_{J^c_f}$ associated to $J_f^c$. It is the unique invariant probability measure such that
\begin{equation}
\label{eq:varia}
h_{\mu_{J^c_f}}(f)+\int_{M}J^c_f(p)d\mu_{J^c_f}(p)=P(J^c_f):=\sup_{\mu \in \mathcal P_f(M)}h_{\mu}(f)+\int_{M}J^c_f(p)d\mu(p),
\end{equation}
where $P(J^c_f)$ is the pressure of the center Jacobian and $h_{\mu}(f)$ is the metric entropy of the map $f$ with respect to $\mu$. We  show an inequality between $h^u_{\mathrm{top}}(f)$ and $P(J^c_f)$.
\begin{lemma}
\label{lemm:ineq}
Let $f\in\mathcal A^{\infty}_{d_s,d_c,d_u}(M)$. Then one has
\begin{equation}
\label{eq:ineq}
h^u_{\mathrm{top}}(f)\geq P(J^c_f).
\end{equation}
If the above inequality  is an equality, then $\mu_{J^c_f}\in \mathcal M^u(f).$
\end{lemma}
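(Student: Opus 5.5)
The inequality comes from combining the variational principle \eqref{eq:varia111} for $h^u$ with a lower bound on the unstable metric entropy of the equilibrium state $\mu:=\mu_{J^c_f}$. The key input is the relation between the full metric entropy and the unstable metric entropy along the strong-unstable foliation. Since $f$ is Anosov with unstable bundle $E_{cu}=E_c\oplus E_u$, the entropy $h_\nu(f)$ equals the metric entropy along the unstable foliation $\mathcal W^{cu}$. This $cu$-entropy splits into a strong-unstable part $h^u_\nu(f)$ and a "center" contribution that is bounded by the center Lyapunov exponents. Concretely, I would use the Ruelle-type inequality for partial entropies (as in \cite{HuHuWu}):
$$h_\nu(f)\le h^u_\nu(f)+\int_M \log|\det(df_x|_{E_c(x)})|\,d\nu(x)=h^u_\nu(f)-\int_M J^c_f\,d\nu,$$
valid for every $\nu\in\mathcal P_f(M)$. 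Since $E_c$ is uniformly expanding, all center exponents are positive, so the sum of positive center exponents equals $\int\log|\det df|_{E_c}|\,d\nu$.

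Granting this inequality, the proof is short. Applying it to $\nu=\mu$ and using \eqref{eq:varia},
$$P(J^c_f)=h_{\mu}(f)+\int J^c_f\,d\mu\le h^u_{\mu}(f)\le h^u_{\mathrm{top}}(f),$$
where the last step is \eqref{eq:varia111}. This gives \eqref{eq:ineq}. If equality $h^u_{\mathrm{top}}(f)=P(J^c_f)$ holds, then every inequality in this chain is an equality. In particular $h^u_{\mu}(f)=h^u_{\mathrm{top}}(f)$, that is, $\mu_{J^c_f}\in\mathcal M^u(f)$.

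The main obstacle is justifying the displayed partial-entropy inequality $h_\nu\le h^u_\nu+\sum\lambda^c_i$ in this generality. I would derive it in three steps:
\begin{enumerate}
\item Write $h_\nu(f)=h_\nu(f,\xi^{cu})$ for an increasing partition $\xi^{cu}$ subordinate to $\mathcal W^{cu}$, using Ledrappier--Young for the Anosov (expanding) foliation.
\item Apply the Ledrappier--Young formula for the intermediate foliation $\mathcal W^u\subset\mathcal W^{cu}$ to get $h_\nu(f,\xi^{cu})-h_\nu(f,\xi^{u})\le\sum_{\text{center}}\lambda_i\,\cdot(\text{dimension})\le\sum\lambda^c_i$.
\item Identify $h_\nu(f,\xi^u)$ with $h^u_\nu(f)$, as in \cite{HuHuWu}.
\end{enumerate}
The gap between the two foliation entropies is the entropy of the quotient dynamics on $\mathcal W^{cu}/\mathcal W^u$, which is bounded by the volume growth of center directions (a Ruelle-type bound). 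As an alternative I would cite the known inequality $h_\nu(f)\le h^u_\nu(f)+\sum_{\lambda_i^c>0}\lambda^c_i$ from the partial-entropy literature, e.g.\ \cite{HuHuWu}, rather than reprove it.
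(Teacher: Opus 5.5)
Your proposal is correct and follows essentially the same route as the paper. The paper also applies the Ledrappier--Young inequality $h_{\mu}(f)\leq h^u_{\mu}(f)+\sum_i m_i\lambda_{c,i}$, which it cites from \cite{LY} and \cite[Corollary A.1]{HuHuWu} rather than proving it, to the ergodic equilibrium state $\mu_{J^c_f}$. It then identifies $\int J^c_f\,d\mu_{J^c_f}=-\sum_i m_i\lambda_{c,i}$ via Oseledets and the ergodic theorem, and concludes with $h^u_{\mu_{J^c_f}}(f)\leq h^u_{\mathrm{top}}(f)$ exactly as you do.
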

\begin{proof}
Since $f$ is Anosov, the equilibrium state $\mu_{J^c_f}$ is ergodic. 
We use the Ledrappier-Young inequality \cite{LY} (see also \cite[Corollary A.1]{HuHuWu}) which states that 
\begin{equation}
\label{eq:HuHuWu}
h_{\mu_{J^c_f}}(f)\leq h^u_{\mu_{J^c_f}}(f)+\sum_{i=1}^rm_i\lambda_{c,i}(f,\mu_{J^c_f}), 
\end{equation}
where  $\lambda_{c,1}(f,\mu_{J^c_f})\geq \lambda_{c,2}(f,\mu_{J^c_f})\geq \ldots \geq \lambda_{c,r}(f,\mu_{J^c_f})>0$ are the central Lyapunov exponents of $\mu_{J^c_f}$ (i.e. the distinct Lyapunov exponents of the cocycle $df|_{E_c}$ with respect to $\mu_{J^c_f}$) and $m_i$ are their multiplicities. Since $E_c$ is uniformly expanded, all the Lyapunov exponents are positive.

By Oseledet's theorem applied to $df|_{E_c}$, for  $\mu_{J^c_f}$-a.e. $x\in M$, we have
$$\lim_{n\to+\infty}\frac 1n\log |\det(df^n_x|_{E_c(x)})|= \sum_{i=1}^rm_i\lambda_{c,i}(f, \mu_{J^c_f}).$$
Recall that we have 
$J^c_f(x)=-\log |\mathrm{det}(df_x|_{E_c(x)})|. $ The chain rule then implies, for any $n\geq 1$,
$$\frac 1n \sum_{k=0}^{n-1}J^c_f(f^kx)=  -\frac 1n\log |\det(df^n_x|_{E_c(x)})|.$$
Now, the measure $\mu_{J^c_f}$ is ergodic so by the ergodic theorem, for $\mu_{J^c_f}$-a.e. $x\in M$, 
$$ \int_{M}J^c_f d\mu_{J^c_f}=-\sum_{i=1}^rm_i\lambda_{c,i}(f, \mu_{J^c_f})=\lim_{n\to+\infty}\frac 1n \sum_{k=0}^{n-1}J^c_f(f^kx).$$
Plugging this into \eqref{eq:HuHuWu} and using \eqref{eq:varia} yields
\begin{equation}
\label{eq:uvaria}
h^u_{\mu_{J^c_f}}(f)\geq h_{\mu_{J^c_f}}(f)-\sum_{i=1}^rm_i\lambda_{c,i}(f, \mu_{J^c_f})=h_{\mu_{J^c_f}}(f)+\int_{M}J^c_fd\mu_{J^c_f}=P(J^c_f).
\end{equation}
Using $h^u_{\mu_{J^c_f}}(f)\leq h^u_{\mathrm{top}}(f)$ gives \eqref{eq:ineq}. Moreover, equality implies $\mu_{J^c_f}\in \mathcal M^u(f).$
\end{proof}

\section{The first Pollicott-Ruelle resonance on $(d_s+d_c)$-forms}
\label{sec:Ruelle}
In this section, we consider a smooth closed manifold $M$ and $f\in \mathcal A_{d_s,d_c,d_u}^\infty(M)$. The goal is to study the \emph{Pollicott-Ruelle spectrum} of $f$ acting on $(d_s+d_c)$-forms. 
\subsection{The Pollicott-Ruelle resonances} 
In this subsection, we recall some facts about the functional approach for Anosov diffeomorphisms. For a detailed introduction to the subject, we refer to \cite{Ba} and \cite[Section 4.1]{Humu}. For $0\leq k \leq d$, define the space of smooth $k$-forms
$\Omega_k= C^{\infty}(M; \Lambda^k T^* M). $
The diffeomorphism $f$ acts on $\Omega_k$ by pushforward:
\begin{equation}
\label{eq:Lie}
f_{*,k}: \Omega_k\to \Omega_k, \quad f _{*,k} \omega:=f_*\omega.
\end{equation}
For an Anosov diffeomorphism, one can associate to $f_{*,k}$ a discrete spectrum $\mathrm{Res}_k(f)$, the \emph{Pollicott-Ruelle resonances}, by making it act on specially designed \emph{anisotropic spaces}, see for instance \cite{BKL,BT,GouLiv,Fau08}. More precisely,  the resolvent $R_k(\lambda):=(f_{*,k}-\lambda)^{-1}:L^2(M;\Lambda^kT^*M)\to L^2(M;\Lambda^kT^*M)$, defined for $\lambda \in \mathbb C$ with $|\lambda|\gg 1$ admits a meromorphic extension $R_k(\lambda)$ to $\mathbb C\setminus \{0\}$,
$$R_k(\lambda):\Omega_k\to \mathcal D'_k(M). $$ 
The poles of the extension are intrinsic and are called the \emph{Pollicott-Ruelle resonances $\mathrm{Res}_k(f)$ of $f_{*,k}$} (or the Pollicott--Ruelle resonances of $f$ acting on $k$-forms). For any $\lambda_0\in \mathrm{Res}_k(f)$, the spectral projector at $\lambda_0$ is given by
$$ \Pi_k^{\lambda_0}=-\frac{1}{2i\pi}\int_{\gamma}R_k(z)dz,$$
where $\gamma$ is a small loop around $\lambda_0$. The projector has finite rank. The (algebraic) multiplicity of $\lambda_0$ is given by the rank of the spectral projector:
\begin{equation}
\label{eq:mult}
\forall \lambda_0\in \mathrm{Res}_k(f), \quad m_k(\lambda_0):=\mathrm{rank}(\Pi_k^{\lambda_0})\in \mathbb N.
\end{equation}
Near a resonance $\lambda_0\in \mathrm{Res}_k(f)$, the resolvent admits a Laurent expansion of the form
\begin{equation}
\label{eq:Laurent}R_k(\lambda)=R_k^H(\lambda)-\sum_{j=1}^{N(\lambda_0)}\frac{(f_{*,k} -\lambda_0)^{j-1}\Pi^{\lambda_0}_k}{(\lambda-\lambda_0)^j}, 
\end{equation}
where $R_k^H(\lambda)$ is the holomorphic part near $\lambda_0$. The \emph{generalized resonant states} are the elements in the range of the spectral projector. Define a dual decomposition to the Anosov decomposition:
\begin{equation}
\label{eq:dual}
T^*M=E_s^*\oplus E_{cu}^*, \quad E_\bullet^*(E_\bullet)=0, \ \bullet=s,cu.
\end{equation}
We have the following characterization of generalized resonant states:
$$
\mathrm{Res}_{k,\lambda_0,\infty}(f):=\Pi^{\lambda_0}_k(\Omega_k)=\{u\in \mathcal D'_k(M)\mid (f_{*,k}-\lambda_0)^{N(\lambda_0)}u=0,\ \mathrm{WF}(u)\subset E_{cu}^*\},
$$
where $\mathrm{WF}(u)$ denotes the wavefront set of a current $u$, see \cite[Chapter VIII]{Hor}.
The \emph{resonant states} are defined as 
\begin{equation}
\label{eq:resonant}
\mathrm{Res}_{k,\lambda_0}(f):=\{u\in \mathcal D'_k(M)\mid (f_{*,k}-\lambda_0)u=0,\ \mathrm{WF}(u)\subset E_{cu}^*\}. \end{equation}
We will say that $f$ has no Jordan block at $\lambda_0$ if $N(\lambda_0)=1$, that is, if $\mathrm{Res}_{k,\lambda_0}(f)=\mathrm{Res}_{k,\lambda_0,\infty}(f)$. Note that applying the previous results to $f^{-1}$ yields notions of (generalized) co-resonant states, see \eqref{eq:cores}.

For $m\geq 1$, let 
$
\mathcal F_m(f):=\{x\in M\mid f^m(x)=x\}.
$
We recall the definition of the \emph{dynamical determinants}. For $k\in \{ 0,1,\ldots, d\}$, let
\begin{equation}
\label{eq:D_f,k}
\forall w\in \mathbb C, |w|\ll 1, \quad \mathcal D_{f,k}(w):=\exp\left( -\sum_{m=1}^{+\infty} \frac {w^m}m\sum_{x\in \mathcal F_m(f)}\frac{\tr(\Lambda^kdf^{-m}(x))}{|\mathrm{det}(\mathrm{Id}-df^{-m}(x))|}\right).
\end{equation}
By \cite[Theorem 6.2]{Ba},  $\mathcal D_{f,k}$ admits a \emph{holomorphic} extension to $\mathbb C$. Moreover,
\begin{equation}
\label{eq:zero}
\forall w\in \mathbb C^*, \quad \mathcal D_{f,k}(w)=0 \ \iff \ w^{-1}\in \mathrm{Res}_k(f),
\end{equation}
 that is, $w$ is a zero of $ \mathcal D_{f,k}$ if and only if  $w^{-1}$ is a Pollicott-Ruelle resonance of $f$ for the action on $k$-forms. Moreover, the multiplicity of the zero coincides with the algebraic multiplicity $m_k(w^{-1})$ of $w^{-1}$.

\subsection{Computing the first resonance}
\label{sec:resona}
Equation \eqref{eq:zero} allows us to understand the spectral theory, that is, the Pollicott-Ruelle spectrum of $f$ by studying weighted sums over periodic points. Recall that $\lambda\in \mathrm{Res}_k(f)\cap \mathbb R_+$ is the first resonance for the action on $k$-forms if $\mathrm{Res}_k(f)\subset \{z\in \mathbb C,|z|\leq \lambda\}.$
\begin{proposition}[First Ruelle resonance]
\label{prop1st}
Let $f\in\mathcal A_{d_s,d_c,d_u}^\infty(M)$ be topologically mixing and satisfying Assumption \ref{assump}. Then $e^{P(J^c_f)}$ is the first Pollicott-Ruelle resonance for the action on $(d_c+d_s)$-forms.
\begin{equation}
\label{eq:equality}
\sup\{|z|, z\in \mathrm{Res}_{d_s+d_c}(f)\}=e^{P(J^c_f)}.
\end{equation}
Moreover, $e^{P(J^c_f)}$ has no Jordan block and has multiplicity equal to one and is the only resonance on the critical circle $\{z\in \mathbb C, \ |z|=e^{P(J^c_f)}\}$. Finally, the trace of the spectral projector of $f$ at $z=e^{P(J^c_f)}$ is equal to $\mu_{J^c_f}$, the equilibrium state associated to $J^c_f$.
\end{proposition}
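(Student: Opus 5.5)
We work with the dynamical determinant $\mathcal D_{f,k}$ for $k=d_s+d_c$ and use \eqref{eq:zero}: it suffices to show that $\mathcal D_{f,k}(w)$ is holomorphic and nonvanishing on $|w|<e^{-P(J^c_f)}$, has a simple zero at $w=e^{-P(J^c_f)}$, and has no other zero on the circle $|w|=e^{-P(J^c_f)}$. The plan is to compare the weights in \eqref{eq:D_f,k} with the Ruelle zeta function of the potential $J^c_f$.

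\textbf{Step 1: leading term of the weights.} At a periodic point $x\in\mathcal F_m(f)$, the map $df^{-m}(x)$ preserves $E_s\oplus E_c\oplus E_u$. On $E_s$ it is expanding, and on $E_{cu}$ it is contracting. Since the splitting is dominated, the largest eigenvalue of $\Lambda^k df^{-m}(x)$ for $k=d_s+d_c$ is $\det(df^{-m}|_{E_s\oplus E_c})$, coming from the invariant $k$-plane $E_s\oplus E_c$. Every other eigenvalue is smaller by a factor $e^{-\eps m}$ uniformly. Assumption \ref{assump} fixes the orientations, so this leading eigenvalue has a definite sign. Moreover
$$|\det(\mathrm{Id}-df^{-m}(x))|=|\det(df^{-m}|_{E_s})|\,(1+O(e^{-\eps m})).$$
Hence
$$\frac{\tr(\Lambda^kdf^{-m}(x))}{|\det(\mathrm{Id}-df^{-m}(x))|}=\pm|\det(df^{m}_x|_{E_c})|^{-1}\,(1+O(e^{-\eps m}))=\pm e^{S_mJ^c_f(x)}(1+O(e^{-\eps m})).$$
The sign should come out as $+1$ after checking orientations; this is exactly where Assumption \ref{assump} is used. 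It follows that
$$\mathcal D_{f,k}(w)=\exp\Big(-\sum_m \tfrac{w^m}{m}Z_m\Big)\cdot E(w),\qquad Z_m=\sum_{x\in\mathcal F_m}e^{S_mJ^c_f(x)},$$
where $E$ is holomorphic and nonvanishing on the slightly larger disc $|w|<e^{\eps-P(J^c_f)}$. To bound the error sums, we use $\sum_{x\in\mathcal F_m}e^{S_mJ^c_f(x)}\le Ce^{mP(J^c_f)}$.

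\textbf{Step 2: Ruelle zeta function.} The first factor is $\zeta_{J^c_f}(w)^{-1}$, the inverse of the weighted Ruelle zeta function. For a topologically mixing Anosov diffeomorphism and Hölder potential, by Parry--Pollicott (via Markov partitions and the Ruelle transfer operator), $\zeta_{J^c_f}$ extends meromorphically to $|w|<e^{\eps'-P(J^c_f)}$. It has a single simple pole at $w=e^{-P(J^c_f)}$ and no other poles on the closed disc of radius $e^{-P(J^c_f)}$. The absence of other poles on the critical circle uses mixing, which rules out a rotation of the spectrum. Consequently, $\mathcal D_{f,k}$ has exactly one zero in the closed disc, and it is simple at $e^{-P(J^c_f)}$. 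By \eqref{eq:zero} with multiplicities, this gives \eqref{eq:equality} and multiplicity one. Rank one forces the absence of a Jordan block, and it also shows that $e^{P(J^c_f)}$ is the only resonance on the critical circle.

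\textbf{Step 3: the trace of the projector.} Write $\Pi=u\otimes v$, where $u$ is the resonant state and $v$ the co-resonant state. The trace is the measure $\phi\mapsto \tr(\phi\,\Pi)=\langle u\wedge v,\phi\rangle$; the wavefront conditions $E_{cu}^*$ and $E_s^*$ are transverse, so the wedge is defined. We identify it through a perturbation argument. Replace $f_*$ by $f_*\circ e^{s\phi}$ and consider the determinant with weight $e^{sS_m\phi}$. Repeating Steps 1 and 2, the leading zero moves to $e^{-P(J^c_f+s\phi)}$. First-order perturbation theory of a simple eigenvalue shows that the derivative in $s$ is $\tr(\phi\Pi)$ times the eigenvalue. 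On the other hand, $\frac{d}{ds}P(J^c_f+s\phi)|_{s=0}=\int\phi\, d\mu_{J^c_f}$. Comparing the two identifies the trace with $\mu_{J^c_f}$; normalization follows by taking $\phi=1$.

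\textbf{Main obstacle.} I expect the main difficulty to be Step 1. One must produce a uniform exponential gap between the leading eigenvalue of $\Lambda^k df^{-m}$ and the rest, and verify the signs, so that the correction factor $E$ is truly nonvanishing beyond the critical radius. The gap requires handling every $k$-plane that mixes directions from $E_s$, $E_c$, $E_u$ and using both domination inequalities. I would do this by computing $\Lambda^k$ in an adapted basis at periodic points.
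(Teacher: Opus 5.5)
Your Steps 1 and 2 follow the paper's argument. The paper uses the same domination estimate, giving $\tr(\Lambda^{d_s+d_c}df^{-m}(x))/|\det(\mathrm{Id}-df^{-m}(x))|=e^{S_mJ^c_f(x)}(1+O(e^{-\nu m}))$. It then factors the dynamical determinant as $\zeta^{-1}$ times a nonvanishing holomorphic factor on a larger disc, and invokes Ruelle's theorem for the zeta function of a mixing Anosov map. The sign you hedge on needs no further check: $\det(df^{-m}(x)|_{E_{cs}})>0$ directly from Assumption~\ref{assump}. This step is also the easy part, not the main obstacle.

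You depart from the paper in Step 3, and your route is correct.

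\textbf{The paper's route.} It inserts a weight $g$ into the flat trace, using $\tr^\flat(g f_*^m)=\sum_{x\in\mathcal F_m(f)} g(x)\tr(\Lambda^{d_s+d_c}df^{-m}(x))/|\det(\mathrm{Id}-df^{-m}(x))|$. Meromorphy of $Z_{g,f}(z)=\tr^\flat(gf_*(z-f_*)^{-1})$ and a contour deformation then give $\sum_{x} g(x)e^{S_mJ^c_f(x)}=e^{mP(J^c_f)}\big((\nu\wedge\theta)(g)+O(e^{-\eps m})\big)$. The paper recognizes this as Bowen's periodic-orbit formula for the equilibrium state.

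\textbf{Your route.} You perturb the operator itself, $\mathcal L_s=f_*\circ e^{s\phi}$. You combine Kato's formula $\lambda'(0)=\lambda_0\langle v,\phi u\rangle$ (with $\langle v,u\rangle=1$, using $f^*v=\lambda_0 v$) with $\frac{d}{ds}P(J^c_f+s\phi)|_{s=0}=\int\phi\,d\mu_{J^c_f}$. This is more conceptual: the trace of the projector is the derivative of the logarithm of the leading eigenvalue, i.e.\ of the pressure. It replaces Bowen's formula by differentiability of pressure, and needs no flat traces or contour integrals.

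\textbf{What your route costs.} You should make three points explicit.
\begin{enumerate}
\item Steps 1--2 must be rerun for every real $s$ near $0$, with weights $e^{sS_m\phi}$ and the Hölder potential $J^c_f+s\phi$, using the zeros/resonances correspondence for weighted operators.
\item You must work on one fixed anisotropic space on which $s\mapsto\mathcal L_s$ is analytic. This holds because multiplication by smooth functions is bounded there.
\item Kato's eigenvalue $\lambda(s)$ equals $e^{P(J^c_f+s\phi)}$. For small $s$, $\lambda(s)$ is the only resonance near $\lambda_0$, and $s\mapsto P(J^c_f+s\phi)$ is continuous.
\end{enumerate}

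The paper's approach needs only the unperturbed operator. As a by-product it also yields an exponential rate in the equidistribution of the weighted periodic sums.
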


 \begin{proof} Using the dominated splitting $E_{cu}=E_c\oplus E_u$ and \eqref{eq:zero}, we will show that $\mathcal D_{f,d_s+d_c}(z)$ is essentially given (on the critical circle of convergence) by the exponentials of Birkhoff sums of $J^c_f$ over periodic points. 
 
 More precisely,  let $m\geq 1$ and let $x\in \mathcal F_m(f)$. Let us understand the spectrum of the linear map $df^{-m}(x)$. Since $f\in\mathcal A_{d_s,d_c,d_u}^{\infty}(M)$, we see that $df^{-m}(x)$ decomposes as a block matrix with blocks $df^{-m}(x)|_{E_\bullet(x)}$ for $\bullet=s,c,u$. Moreover, there exist constants $C,\nu>0$ such that for any  $x\in M$ and $m\geq 1$,
 \begin{equation}
 \label{eq:eigen fn}
 \begin{split}
 \min_{\mu \in \mathrm{Spec}(df^{-m}(x)|_{E_s(x)})}|\mu|&\geq Ce^{\nu m}\geq 1 \geq C e^{-\nu m}\geq \max_{\mu \in \mathrm{Spec}(df^{-m}(x)|_{E_c(x)})}|\mu|
 \\&\geq C e^{-\nu m} \min_{\mu \in \mathrm{Spec}(df^{-m}(x)|_{E_c(x)})}|\mu|\geq \max_{\mu \in \mathrm{Spec}(df^{-m}(x)|_{E_u(x)})}|\mu|.
 \end{split}
 \end{equation}
 If we denote by $\rho^m_\bullet(x):=|\mathrm{det}(df^{-m}(x)|_{E_\bullet(x)})|$ for $ \bullet =s,c,u$, we deduce, using that $f$ satisfies Assumption \ref{assump}, when $m\to+\infty$, 
 \begin{equation}
 \label{eq:equiv}
 \begin{split}
 &|\mathrm{det}(\mathrm{Id}-df^{-m}(x))|=\rho^m_s(x)\big(1+O(e^{-\nu m})\big),
 \\ &\tr(\Lambda^{d_s+d_c}df^{-m}(x))=\rho^m_s(x)\rho^m_c(x)\big(1+O(e^{-\nu m})\big),
 \end{split}
 \end{equation}
  uniformly in $x\in M$. Plugging \eqref{eq:equiv} into \eqref{eq:D_f,k} gives
  \begin{equation} 
  \label{eq:unif}\sum_{x\in \mathcal F_m(f)}\frac{\tr(\Lambda^{d_s+d_c}df^{-m}(x))}{|\mathrm{det}(\mathrm{Id}-df^{-m}(x))|}=\sum_{x\in \mathcal F_m(f)}\rho_c^m(x)\big(1+O(e^{-\nu m})\big)=\left(\sum_{x\in \mathcal F_m(f)}\rho_c^m(x)\right)\big(1+O(e^{-\nu m})\big).
  \end{equation}
Now, using the chain rule, we obtain
$$ \forall m \geq 1,\ \forall x\in \mathcal F_m(f), \quad \rho^m_c(x)=|\mathrm{det}(df^m(x)|_{E_c(x)})|^{-1}=\exp\left( \sum_{k=0}^{m-1}J_f^c(f^kx) \right).$$
Define
$$\zeta_{f, J^c_f}(w):=\exp\left(-\sum_{m=1}^{+\infty} \frac{w^m}m\sum_{x\in \mathcal F_m(f)} \exp\left( \sum_{k=0}^{m-1}J_f^c(f^kx) \right)\right).$$
This is the Ruelle zeta function with potential $J^c_f$. Since $f$ is topologically mixing, by \cite[Theorem 1]{Ru}, there is a $R=R(J^c_f)>e^{-P(J^c_f)}$ such that $\zeta_{f, J^c_f}$ has no zero in $ \overline{B}(0,R)$ except for a simple zero at  $e^{-P(J^c_f)}$. 

We have $\mathcal D_{f,d_c+d_s}(w)=\zeta_{f, J^c_f}(w)e^{-G(w)}$, where
$$G(w):=\sum_{m=1}^{+\infty}\frac{w^m}m\sum_{x\in \mathcal F_m(f)}\left(\frac{\tr(\Lambda^{d_s+d_c}df^{-m}(x))}{|\mathrm{det}(\mathrm{Id}-df^{-m}(x))|}-\exp\left( \sum_{k=0}^{m-1}J_f^c(f^kx) \right)\right). $$
By \eqref{eq:unif} and the standard estimate 
$$\sum_{x\in \mathcal F_m(f)}\exp\left( \sum_{k=0}^{m-1}J_f^c(f^kx) \right)=O(e^{mP(J^c_f)}), $$
we see that $G(w)$ is holomorphic in a disk strictly larger than $\{z\in \mathbb C, |z|\leq e^{-P(J^c_f)}\}$ which means that $\mathcal D_{f,d_c+d_s}$ also has no zero in $ \overline{B}(0,R)$ except for a simple zero at  $e^{-P(J^c_f)}$. By \eqref{eq:zero}, this shows that $e^{P(J^c_f)}$ is the first resonance and that it is the only one on the critical circle.
 
 We now study the trace of the spectral projector at the first resonance. 
 Let $g\in C^{\infty}(M,\mathbb R_+)$ be a smooth and positive function. 
 The Guillemin trace formula (see for instance \cite[Proposition 6.3]{BaTs}) gives for any $m\geq 1$, 
$$\tr^{\flat}\big(gf_{*,d_s+d_c}^m\big)=\sum_{x\in \mathcal F_m(f)}\frac{g(x)\tr(\Lambda^{d_s+d_c}df^{-m}(x))}{|\mathrm{det}(\mathrm{Id}-df^{-m}(x))|}, $$
where $\tr^{\flat}$ denotes the \emph{flat trace} and $f_{*,d_s+d_c}$ denotes the pushforward on $(d_s+d_c)$-forms. For any $z\in \mathbb C$ such that $|z|\gg 1$, define $Z_{g,f}(z):=\tr^{\flat}\big(gf_{*,d_s+d_c}(z-f_{*,d_s+d_c})^{-1}\big)$. For $|z|\gg 1$,
\begin{equation}
\label{eq:Zfg}
Z_{g,f}(z)=\sum_{m=1}^{+\infty}z^{-m}\tr^{\flat}\big(gf_{*,{d_s+d_c}}^m\big)=\sum_{m=1}^{+\infty}z^{-m}\sum_{x\in \mathcal F_m(f)}\frac{g(x)\tr(\Lambda^{d_s+d_c}df^{-m}(x))}{|\mathrm{det}(\mathrm{Id}-df^{-m}(x))|}.
\end{equation}
%In the above computation, the commutation of the sum and trace is justified for $|z|\ll 1$ since the flat trace can be interpreted as the trace on the anisotropic spaces. 
Note that $Z_{g,f}$ admits a meromorphic extension to $\mathbb C\setminus \{0\}$ since the resolvent does, see \cite[Proposition 6.3]{BaTs}.
Using the first part of the proof, we see that $Z_{g,f}$ has a first simple pole at $z=e^{P(J^c_f)}$ and there is some $\eps>0$ such that any other pole has modulus smaller than $e^{P(J^c_f)}-\eps$. We write the spectral projector at the first resonance $e^{P(J^c_f)}$ as 
$ \Pi_0(e^{P(J^c_f)})=\nu(\cdot) \theta,$
where $\theta$ (resp.  $\nu$) is a resonant (resp. co-resonant state) which are normalized so that $(\nu,\theta)=1$, where $(.,.)$ denotes the distributional product. Recall from \eqref{eq:Laurent}, that near $z=e^{P(J^c_f)}$, 
\begin{align*}\tr^{\flat}\big(gf_{*,d_s+d_c}(z-f_{*,d_s+d_c})^{-1}\big)&=\tr^{\flat}\Big(gf_{*,d_s+d_c}\big(\tfrac{\Pi_0}{z-e^{P(J^c_f)}}+h(z)\big) \Big),
\end{align*}
for some holomorphic map $h$.
In particular, the residue at $e^{P(J^c_f)}$ is given by
\begin{align*}\mathrm{Res}_{e^{P(J^c_f)}}(Z_{g,f})&=\tr(gf_{*,d_s+d_c}\Pi_0(e^{P(J^c_f)}))=e^{P(J^c_f)}(\nu,g\times  \theta)=:e^{P(J^c_f)}(\nu\wedge \theta)(g).
\end{align*}
Next, we use Cauchy's formula on \eqref{eq:Zfg} to obtain, for some $\rho<e^{P(J^c_f)}$,
\begin{align*}\sum_{x\in \mathcal F_m(f)}\frac{g( x)\tr(\Lambda^{d_s+d_c}df^{-m}(x))}{|\mathrm{det}(\mathrm{Id}-df^{-m}(x))|}=&\mathrm{Res}_{e^{P(J^c_f)}}(Z_{g,f}(z)z^{m-1})+\frac 1{2i\pi}\int_{\rho \mathbb S^1}Z_{g,f}(z)z^{m-1}dz.
\end{align*}
Since $Z_{g,f}$ has (at most) a simple first pole $e^{P(J^c_f)}$, one has
$$\sum_{x\in \mathcal F_m(f)}\frac{g(x)\tr(\Lambda^{d_s+d_c}df^{-m}(x))}{|\mathrm{det}(\mathrm{Id}-df^{-m}(x))|} =e^{mP(J^c_f)}((\nu\wedge \theta)(g)+O(e^{-\eps m})),$$
for some $\eps>0$. Using \eqref{eq:equiv}, this gives
\begin{equation}
\label{eq:Bowen}
(\nu\wedge \theta)(g)=\lim_{m\to+\infty} e^{-mP(J^c_f)}\sum_{x\in \mathcal F_m(f)}\exp\left( \sum_{k=0}^{m-1}J_f^c(f^kx) \right)g(x).
\end{equation}
For $g\equiv 1$, we obtain
\begin{align*}\lim_{m\to+\infty} e^{-mP(J^c_f)}\sum_{x\in \mathcal F_m(f)}|\mathrm{det}(df^m(x)|_{E_c(x)})|^{-1}=1. 
\end{align*}
This means that \eqref{eq:Bowen} is Bowen's formula for the equilibrium state (see for instance  \cite[Theorem 20.3.7]{HK}):
$$(\nu\wedge \theta)(g)=\lim_{m\to+\infty} \frac{\sum_{x\in \mathcal F_m(f)}\exp\left( \sum_{k=0}^{m-1}J_f^c(f^kx) \right)g(x)}{\sum_{x\in \mathcal F_m(f)}\exp\left( \sum_{k=0}^{m-1}J_f^c(f^kx) \right)}=\mu_{J^c_f}(g), $$ which concludes the proof.
 \end{proof}

 \section{The $u$-Margulis system and the co-resonant state}
 \label{sec:usystem}
 In this section, we further suppose that $M=\mathbb T^d$ and that $f$ satisfies the conditions of Theorem \ref{main theo}.
  \subsection{Constructing the $u$-system}
The main result of this section is the following proposition which constructs a holonomy-invariant $u$-Margulis system.
\begin{proposition}
\label{prop:umarg}
Let $f\in\mathcal A^\infty_{d_s,d_c,d_u}(\mathbb T^d)$ satisfying Assumption \ref{assump}. Suppose that:
 \begin{itemize}
 \item either $d=3$;
\item or $f\in A^\infty_A(\mathbb T^d)$.
 \end{itemize}
Then there exists a $u$-system $(\mu^u_x)_{x\in \mathbb T^d}$ such that:
\begin{itemize}
\item for any $x\in \mathbb T^d$, $\mu^u_x$ has full support in $\mathcal W^u(x)$ and is atomless;
\item one has 
\begin{equation}
\label{eq:confu}
\forall  x\in \mathbb T^d,\quad f^*\mu^u_x=\lambda^u(A)\mu^u_{f^{-1}(x)};
\end{equation}
\item the system is $\mathrm{Hol}^{cs}$-invariant. For any rectangle $R\subset \mathbb T^d$ and any $p,q\in R$, we have $(\mathrm{Hol}^{cs}_{p,q})_*\mu^u_p=\mu^u_q.$
\end{itemize}
\end{proposition}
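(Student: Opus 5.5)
We construct the system on the universal cover, using the linear model and the three properties of Theorem \ref{theo:FPS}: dynamical coherence, center-fiberedness, and global $cs/u$-product structure. When $d=3$ the same three properties are supplied by \cite{Po}. Fix the lifts $F$, $H$, $A$ on $\mathbb R^d$ as in \S\ref{sec:univ}.

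\textbf{Step 1: definition on the cover.} Let $\mathrm{Leb}^{u}_A$ be the $d_u$-dimensional Lebesgue measure on the leaves of $\widetilde{\mathcal W^u_A}$, i.e. affine translates of $E^A_u$, normalised by a fixed translation-invariant volume on $E_u^A$. For $\tilde x\in\mathbb R^d$, consider the linear projection $\pi:\mathbb R^d\to E^A_u$ along $E^A_{cs}$. By center-fiberedness \eqref{eq:cfibered}, $H$ maps center-stable leaves of $F$ onto the affine leaves $H(\tilde y)+E^A_{cs}$. Define
$$\Phi_{\tilde x}:=\pi\circ H|_{\widetilde{\mathcal W^u_F}(\tilde x)}:\widetilde{\mathcal W^u_F}(\tilde x)\to E^A_u .$$
By the global product structure, $\widetilde{\mathcal W^u_F}(\tilde x)$ meets each $\widetilde{\mathcal W^{cs}_F}$-leaf in exactly one point. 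The $cs$-leaves are exactly the fibres $H^{-1}(v+E^A_{cs})$, so $\Phi_{\tilde x}$ is a continuous bijection onto $E_u^A$. Invariance of domain, or properness coming from $H$ being at bounded distance from the identity, then upgrades it to a homeomorphism. Set $\tilde\mu^u_{\tilde x}:=(\Phi_{\tilde x}^{-1})_*\mathrm{Leb}_{E^A_u}$. This measure is atomless, has full support and is Radon, since $\Phi_{\tilde x}$ is a homeomorphism. It depends only on the leaf, so it is a $u$-system.

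\textbf{Step 2: invariance properties.}
\emph{Holonomy.} For $\tilde x,\tilde z$, the $cs$-holonomy maps $\tilde w\in\widetilde{\mathcal W^u}(\tilde x)$ to the unique point of $\widetilde{\mathcal W^u}(\tilde z)$ in the same $cs$-leaf. That point has the same $\pi\circ H$ value, so $\Phi_{\tilde z}\circ \mathrm{Hol}^{cs}_{\tilde x,\tilde z}=\Phi_{\tilde x}$. Global holonomy invariance follows, and hence local invariance for rectangles.
\emph{Deck transformations.} $H(\tilde x+\gamma)=H(\tilde x)+\gamma$ changes $\pi\circ H$ by the constant translation $\pi(\gamma)$, which preserves Lebesgue measure. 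Hence the system descends to $\mathbb T^d$.
\emph{Conformality.} Since $\pi A=A\pi$, we have $\Phi_{F\tilde x}\circ F=A|_{E_u^A}\circ \Phi_{\tilde x}$. Moreover $A$ multiplies $\mathrm{Leb}_{E_u^A}$ by $|\det A|_{E^A_u}|=\lambda^u(A)$, where the sign is fixed by Assumption \ref{assump}. Together these give $F^*\tilde\mu^u_{F\tilde x}=\lambda^u(A)\tilde\mu^u_{\tilde x}$, which is \eqref{eq:confu} after reindexing. Finally, measurability and continuity of $x\mapsto\mu^u_x(\phi)$ follow from continuity of $H$ and of the foliations.

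\textbf{Main obstacle.} The delicate point is Step 1: checking that $\Phi_{\tilde x}$ is a homeomorphism onto all of $E^A_u$, and that $H$ sends $cs$-leaves exactly onto full affine $cs$-planes. This is where center-fiberedness and the global product structure from \cite{FPS} are essential. For surjectivity, I would use that every affine plane $v+E^A_{cs}$ is $H$ of some $F$-$cs$-leaf (since $H$ is onto) and then apply the global product structure. For continuity of the inverse, I would use properness: $\|H-\mathrm{id}\|$ is bounded, and on a fixed $u$-leaf the quantity $\|\pi(\tilde w-\tilde x)\|$ tends to infinity as $\tilde w$ leaves compact sets. This holds because the $u$-leaves are uniformly transverse to $E^A_{cs}$ at large scale, which in turn follows from the quasi-isometry properties established in \cite{FPS}.
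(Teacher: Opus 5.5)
Your proposal is correct and follows essentially the same route as the paper. The map $\pi^u_A\circ H$ restricted to a strong-unstable leaf is shown to be a bijection onto $E^A_u$ using center-fiberedness and the global $cs/u$-product structure, and invariance of domain upgrades it to a homeomorphism; the system is then the pullback of Lebesgue measure, with holonomy invariance, deck-equivariance and the factor $\lambda^u(A)$ read off from $\pi^u_A\circ A=A|_{E^A_u}\circ\pi^u_A$. Invariance of domain alone already gives continuity of the inverse, so your alternative properness argument via large-scale transversality and quasi-isometry is unnecessary; the paper deliberately avoids any quasi-isometry input, which is what lets the construction work for $d_u\geq 2$.
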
 
\begin{proof}
As in \S \ref{sec:univ}, let $A$ be the action on homology of $f$. Recall that there exists a Hölder continuous homeomorphism $h:\mathbb T^d\to \mathbb T^d$  such that
$ h\circ f=A\circ h $.  Let $H$, $F$ and $A$ denote a lift to $\mathbb R^d$ of $h,f$ and $A$ respectively. Let $\pi^u_A:\mathbb R^d\to E_u^A$ denote the projection on $E_u^A$ parallel to $E_{cs}^A$. Let 
\begin{equation}
\label{eq:Q}
Q:=\pi^u_A\circ H: \mathbb R^d \to E_u^A.
\end{equation}
Since $f$ is center-fibered, see \eqref{eq:cfibered}, we have 
\begin{equation}
\label{eq:Qconst}
\forall \tilde x\in \mathbb R^d,\quad Q(\widetilde{\mathcal W^{cs}_f}(\tilde x))=\{Q(\tilde x)\},
\end{equation}
that is, the map $Q$ is constant on center-stable leaves. We make the following claim:
\begin{equation}
\label{eq:Qhomeo}
\forall \tilde x\in \mathbb R^d,\quad Q_{\tilde x}:=Q|_{\widetilde{\mathcal W^{u}_f}(\tilde x)}:\widetilde{\mathcal W^{u}_f}(\tilde x)\to E_u^A \text{ is a homeomorphism.}
\end{equation}
To prove the claim, we use that $f$ has a global $cs/u$-product structure, see \eqref{eq:globalproduct}. Suppose that there exist $\tilde y,\tilde z\in \widetilde{\mathcal W^{u}_f}(\tilde x)$ such that $Q(\tilde y)=Q(\tilde z)$, that is, such that $H(\tilde z)\in H(\widetilde{\mathcal W_f^{cs}}(\tilde y))$. Using \eqref{eq:cfibered} again and the fact that $H$ is a homeomorphism, this implies $\tilde z\in \widetilde{\mathcal W^{cs}_f}(\tilde y)$. In particular, we have 
$\tilde y,\tilde z\in \widetilde{\mathcal W^{cs}_f}(\tilde y)\cap \widetilde{\mathcal W^{u}_f}(\tilde x).$ Using \eqref{eq:globalproduct}, this gives $\tilde y=\tilde z$ which shows that $Q_{\tilde x}$ is injective.

For the surjectivity, let $v\in E^A_u$. Consider the intersection $(E_{cs}^A+v)\cap H(\widetilde{\mathcal W^{u}_f}(\tilde x)).$ We claim that this intersection is not empty. Indeed, using \eqref{eq:cfibered}, we have 
$$(E_{cs}^A+v)=\widetilde{\mathcal W^{cs}_A}(v)=H\big(\widetilde{\mathcal W^{cs}_f}(H^{-1}(v)) \big). $$
The claim then follows from the fact that $H$ is a homeomorphism and from \eqref{eq:globalproduct}. Let $w\in (E_{cs}^A+v)\cap H(\widetilde{\mathcal W^{u}_f}(\tilde x))$, then by construction, we have $Q_{\tilde x}(H^{-1}w)=v$ which shows that $Q_{\tilde x}$ is surjective.

Since $H$ is a homeomorphism, \eqref{eq:Q} implies that $Q_{\tilde x}$ is a continuous bijection from $\widetilde{\mathcal W^{u}_f}(\tilde x)$ to $E_u^A $. By  Brouwer's invariance of domain theorem, we deduce that $Q_{\tilde x}$ is a homeomorphism. We can now define our system of measures. We let
\begin{equation}
\label{eq:usystem}
\forall \tilde x\in \mathbb R^d, \quad \mu^u_{\tilde x}:=(Q^{-1}_{\tilde x})_*\mathrm{Leb}_{E^u_A},
\end{equation}
where $\mathrm{Leb}_{E^u_A}$ denotes the Lebesgue measure on the vector space $E^u_A$.
\\ \textbf{The $(\mu^u_{\tilde x})_{x\in \mathbb R^d}$ is finite on bounded sets.}
Since $H$ is continuous, \eqref{eq:conj2} implies that we have $C_H:=\sup_{\tilde x\in \mathbb R^d}d(H(\tilde x),\tilde x)<+\infty$. Now, if we let $\tilde y,\tilde z\in \widetilde{\mathcal W^u_f}(\tilde x)$, we have
\begin{align*}d_{\mathbb R^d}(Q_{\tilde x}(\tilde y),Q_{\tilde x}(\tilde z))&= d_{\mathbb R^d}(\pi_A^u(H(\tilde y)),\pi_A^u(H(\tilde z)))\leq \|\pi_A^u\| d_{\mathbb R^d}(H(\tilde y),H(\tilde z))
\\&\leq \|\pi_A^u\|(d_{\mathbb R^d}(\tilde y,\tilde z)+2C_H)\leq \|\pi_A^u\|(d_{u}(\tilde y,\tilde z)+2C_H),
\end{align*}
where $d_{u}$ denotes the distance on the leaf $\widetilde{\mathcal W^u_f}(\tilde x)$. This implies that any bounded set in $\widetilde{\mathcal W^u_f}(\tilde x)$ is sent by $Q_{\tilde x}$ to a bounded set in $E_u^A$ and this shows that $\mu^u_{\tilde x}$ is finite on bounded sets.
\\ \textbf{The system $(\mu^u_{\tilde x})_{x\in \mathbb R^d}$ descends to $\mathbb T^d$.} 
Let $\gamma \in \mathbb Z^d$ and let $\tilde x+\gamma$ be another lift of some $x\in \mathbb T^d$. Note that for any $\tilde z\in \mathbb R^d$, the $\mathbb Z^d$ equivariance of $H$ gives $Q(\tilde z+\gamma)=Q(\tilde z)+\pi^u_A(\gamma). $ Since the Lebesgue measure is translation-invariant, we have
\begin{equation}
\label{eq:Tgamma}
(T_\gamma)_*\mu^u_{\tilde x}=\mu^u_{\tilde x+\gamma}, \quad T_\gamma: \tilde z\mapsto \tilde z+\gamma. \end{equation}
This shows that the measure does not depend on the choice of lift for $x\in \mathbb T^d$. In particular,  the system $(\mu^u_{\tilde x})_{x\in \mathbb R^d}$ descends to a system of measures on $\mathbb T^d$ which we still denote by $(\mu^u_{\tilde x})_{x\in \mathbb T^d}$. All the properties claimed in Proposition \ref{prop:umarg} are local and can thus be proved for the lifted system $(\mu^u_{\tilde x})_{x\in \mathbb R^d}$ instead, which we will do in the rest of the proof.
\\ \textbf{The measure $\mu^u_{\tilde x}$ is atomless and has full support.} Note that since $Q_{\tilde x}$ is a homeomorphism, it is an open map. Let $\tilde Z\subset \widetilde{\mathcal W^{u}_f}(\tilde x)$ be a non-empty open subset. Then $ \mu^u_{\tilde x}(\tilde Z)=\mathrm{Leb}_{E^u_A}(Q_{\tilde x}(\tilde Z))>0$ since $\mathrm{Leb}_{E^u_A}$ has full support. This proves that $\mu^u_{\tilde x}$ has full support. Similarly, for any $\tilde y\in \widetilde{\mathcal W^{u}_f}(\tilde x)$, we have $ \mu^u_{\tilde x}(\{\tilde y\})=\mathrm{Leb}_{E^u_A}(Q_{\tilde x}(\{\tilde y\}))=0$ since $\mathrm{Leb}_{E^u_A}$ is atomless. This proves that $\mu^u_{\tilde x}$ is atomless.
\\ \textbf{The system $(\mu^u_{\tilde x})_{x\in \mathbb R^d}$ has the scaling property \eqref{eq:confu}.}
We start with the observation that $\pi^u_A\circ A=(A|_{E^A_u})\circ \pi^u_A$ since $A$ preserves its generalized eigenspaces. Let $\tilde x\in \mathbb R^d$ and let $\tilde Z\subset \widetilde{W^u_f}(F^{-1}\tilde x)$. We compute, using \eqref{eq:conj2} and \eqref{eq:Q}, 
\begin{equation}
\label{eq:CONF}
\begin{split}
F^*\mu^u_{\tilde x}(\tilde Z)&=\mu^u_{\tilde x}(F(\tilde Z))=\mathrm{Leb}_{E^A_u}(Q\circ F(\tilde Z))=\mathrm{Leb}_{E^A_u}(\pi^u_A\circ H\circ F(\tilde Z))
\\&=\mathrm{Leb}_{E^A_u}(\pi^u_A\circ A\circ H(\tilde Z))=\mathrm{Leb}_{E^A_u}((A|_{E^A_u})\circ \pi^u_A\circ H(\tilde Z))
\\&=\lambda^u(A)\mathrm{Leb}_{E^A_u}( \pi^u_A\circ H(\tilde Z))=\lambda^u(A)\mu^u_{F^{-1}\tilde x}(\tilde Z).
\end{split}
\end{equation}
This implies \eqref{eq:confu} for the induced system on $\mathbb T^d$.
\\ \textbf{The system $(\mu^u_{\tilde x})_{x\in \mathbb R^d}$ is ${\mathrm{Hol}^{cs}}$-invariant.} Let $\tilde x,\tilde y\in \mathbb R^d$ and consider the center-stable holonomy ${\mathrm{Hol}^{cs}_{\tilde x,\tilde y}}:\widetilde{W^u_f}(\tilde x)\to \widetilde{W^u_f}(\tilde y)$. Note that the holonomy is globally well defined since $f$  has global $cs/u$-product structure. Let $\tilde Z\subset \widetilde{W^u_f}(\tilde x)$, then since $f$ is center-fibered  (see \eqref{eq:cfibered}), we have $Q_{\tilde x}(\tilde Z)=Q_{\tilde y}({\mathrm{Hol}^{cs}_{\tilde x,\tilde y}}(\tilde Z)).$ In other words, we have shown that 
\begin{equation}
\label{eq:Holcsuniv}
\forall \tilde x,\tilde y\in \mathbb R^d,\quad ({\mathrm{Hol}^{cs}_{\tilde x,\tilde y}})_*\mu^u_{\tilde x}=\mu^u_{\tilde y}. 
\end{equation}
This implies that the induced system on $\mathbb T^d$ is ${\mathrm{Hol}^{cs}}$-invariant.
\end{proof}
\subsection{The $u$-Margulis system as a coresonant state}
In this subsection, we show that the system of Margulis measures constructed in Proposition \ref{prop:umarg} defines a co-resonant state associated to the resonance $\lambda^u(A)$. The proof is the same as in \cite[\S 5.3]{Humu} and we include the proof for completeness.

Let $\varphi \in \Omega_{d_s+d_c}$ supported in a small rectangle $R$. We define a current by
\begin{equation}
\label{eq:current}
\forall q\in R, \quad \mu_q^u(\varphi):=\int_{\mathcal W_R^u(q)}\Big(\int_{\mathcal W^{cs}_R(y)}\varphi\Big)d\mu^u_q(y).
\end{equation}
\begin{lemma}
\label{lemmWF}
The current $\mu^u_q$ is independent of $q\in R$ and thus defines a current on $\mathbb T^d$ which we will denote by $\mu^u$. Moreover, one has
\begin{equation}
\label{eq:WF}
f^*\mu^u=\lambda^u(A) \mu^u, \quad \mathrm{WF}(\mu^u)\subset E_s^*.
\end{equation}
In other words, $\mu^u$ is a co-resonant state associated to the resonance $\lambda^u(A)$.
\end{lemma}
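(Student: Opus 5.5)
Three things have to be checked, in this order: independence of $q$, the scaling relation, and the wavefront set bound.

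\emph{Independence of $q$.} Fix $\varphi\in\Omega_{d_s+d_c}$ supported in the rectangle $R$ and write $G_\varphi(y):=\int_{\mathcal W^{cs}_R(y)}\varphi$. Since $E_{cs}$ is oriented (Assumption \ref{assump}), this is a well-defined integral of a top form on the leaf, so $G_\varphi$ is a bounded Borel function. It is constant along center-stable plaques of $R$, i.e. $G_\varphi(\mathrm{Hol}^{cs}_{p,q}(y))=G_\varphi(y)$ for $y\in\mathcal W^u_R(p)$. Proposition \ref{prop:umarg} gives $(\mathrm{Hol}^{cs}_{p,q})_*\mu^u_p=\mu^u_q$, and $\mathrm{Hol}^{cs}_{p,q}$ maps $\mathcal W^u_R(p)$ onto $\mathcal W^u_R(q)$. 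Hence $\mu^u_p(\varphi)=\mu^u_q(\varphi)$. If $\varphi$ is supported in two overlapping rectangles, the two definitions agree: shrink to a common sub-rectangle, using that $\mathcal W^{cs}_R(y)\cap\mathrm{supp}\,\varphi$ does not depend on the ambient rectangle. A partition of unity then defines $\mu^u$ on all of $\Omega_{d_s+d_c}$. Continuity in the $C^0$ topology follows because $\mu^u_q$ is finite on compact sets, so $\mu^u$ is a current of order $0$ and degree $d_u$.

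\emph{Scaling.} Since $f^*\mu^u(\varphi)=\mu^u(f_*\varphi)$, I will compute the right-hand side with $f_*\varphi$ supported in $f(R)$, after covering $f(R)$ by small rectangles. Change variables on center-stable leaves, $\int_{\mathcal W^{cs}(f y)}f_*\varphi=\int_{\mathcal W^{cs}(y)}\varphi$; this uses that $f$ maps $cs$-leaves to $cs$-leaves preserving orientation. Then apply the conformality \eqref{eq:confu}, $f^*\mu^u_x=\lambda^u(A)\mu^u_{f^{-1}x}$, to the outer integral, which gives $\mu^u(f_*\varphi)=\lambda^u(A)\mu^u(\varphi)$. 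Whether the convention produces $\lambda^u(A)$ or its inverse is fixed by \eqref{eq:CONF}, and I expect $\lambda^u(A)$.

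\emph{Wavefront set.} This is the main obstacle, because $\mu^u_q$ is only a measure and $G_\varphi$ is not smooth in the transverse direction. I will exploit that $\mu^u$ is smooth along the stable directions. Take local coordinates on a chart $R$ in which the $\mathcal W^s$-plaques are straightened, $R\simeq \mathbb R^{d_s}_s\times \mathbb R^{d_c+d_u}_{cu}$. The $cs$-plaques contain the stable plaques, so $\mu^u$ acts on $\varphi$ by first integrating along stable plaques and then against some transverse measure. Concretely, write $\mu^u=\sigma\wedge \omega_s$, where $\omega_s$ is integration along stable leaves and $\sigma$ is a measure on the quotient $\mathbb R^{d_c+d_u}$ by the stable foliation. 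The mass of $\sigma$ comes from the $cs$-plaques weighted by $\mu^u$ in the $u$-direction.

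Then the derivatives of $\mu^u$ along vectors tangent to $E_s$ vanish, since $\mu^u$ is invariant under the flows of these vectors, and $\mathcal W^s$ is a $C^\infty$-leaved continuous foliation. The Fourier transform of a cutoff of $\mu^u$ therefore decays rapidly in directions conormal to nothing except $E_s^*$. More precisely, a standard result (as in \cite[\S 5.3]{Humu}) says that a current of the form ``smooth along leaves of a foliation tangent to $E_s$'' has $\mathrm{WF}\subset E_s^*$.

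Because the stable foliation is only Hölder transversally, this step has to be done carefully. The first thing I will try is to reproduce the argument of \cite[\S 5.3]{Humu}: integrate by parts against $e^{i\langle x,\xi\rangle}$ using smooth vector fields close to $E_s$, for $\xi$ in a conic neighborhood away from $E_s^*$. If the Hölder regularity causes trouble, the fallback is dynamical. Use $f^*\mu^u=\lambda^u(A)\mu^u$: pulling back by $f^n$ pushes wavefront directions toward $E_s^*$, and this is the standard radial-estimate argument showing that the invariant distributions that are eigenvectors lie in the co-resonant space.
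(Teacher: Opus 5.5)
\textbf{Verdict: the proposal has a genuine gap in the wavefront set bound.}

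\textbf{Where you agree with the paper.} Your proof that $\mu^u_q$ is independent of $q$ (via $G_\varphi\circ\mathrm{Hol}^{cs}_{p,q}=G_\varphi$ and the holonomy invariance from Proposition \ref{prop:umarg}) is correct. So is your proof of the scaling relation, and both are essentially the paper's arguments. The convention in \eqref{eq:CONF} does give $\lambda^u(A)$.

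\textbf{The missing step.} You write $\mu^u=\sigma\wedge\omega_s$ with $\sigma$ a transverse measure. But $\mu^u$ integrates $(d_s+d_c)$-forms over $cs$-plaques, and disintegrating plaque volume along stable plaques gives
\[
\mu^u(\varphi)=\int_\Sigma\int_{\mathcal W^s_R(q)}\langle\varphi(z),b_q(z)\rangle\, d\mathrm{Vol}_{\mathcal W^s(q)}(z)\,d\hat\mu(q).
\]
Here $b_q$ is a $(d_s+d_c)$-vector density that genuinely varies along each stable plaque. Its direction is $\tau^{cs}$, i.e.\ the plane $E_{cs}$, which is only H\"older on $\mathbb T^d$. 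Its size is the Jacobian of the stable holonomy between $d_c$-dimensional transversals inside a $cs$-leaf.

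Leafwise integration by parts against $\chi e^{iS/h}$ gives $O(h^\infty)$ precisely when $\mathrm{ess\,sup}_{q}\|b_q\|_{C^r(\mathcal W^s_R(q))}<\infty$ for all $r$. Proving this is the real content of the lemma, and it has two parts:
\begin{itemize}
\item $E_{cs}$ is uniformly $C^\infty$ along stable leaves. This comes from a backward graph transform, using that $E_u$ dominates $E_{cs}$ and that $E_s$ is contracted.
\item The holonomy Jacobian, given by Pesin's product $\prod_{n\geq 0}\mathcal J(z,\tilde f^nS(z))/\mathcal J(\mathrm{Hol}^sz,\tilde f^nS(\mathrm{Hol}^sz))$, is uniformly smooth along $E_s$. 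This uses that $E_c$ dominates $E_s$, adapting \cite[Proposition 1.8]{GBGW}.
\end{itemize}
You name the right mechanism (``smooth along stable leaves'') but never verify its hypothesis.

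\textbf{Why your substitutes do not supply it.}
\begin{itemize}
\item Vector fields tangent to $E_s$ are in general only H\"older on $\mathbb T^d$, so ``$\mathcal L_X\mu^u=0$'' carries no microlocal information.
\item The same formal Stokes computation gives $\mathcal L_X\mu^u=0$ for every $X$ tangent to $E_{cs}$. That would bound $\mathrm{WF}(\mu^u)$ by the annihilator of $E_{cs}$, which is not available since the $cs$-plaques are a priori only $C^1$. So invariance is not what controls the wavefront set.
\item Integrating by parts with smooth vector fields merely close to $E_s$ also fails. Any component transverse to the $cs$-plaques differentiates $\mu^u_q$, which is only a measure.
\end{itemize}

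\textbf{The dynamical fallback.} This is not valid as stated either: the eigen-equation plus order zero does not force $\mathrm{WF}\subset E_s^*$. Take a fixed point $p$ of $f$ and vectors $v_1,\ldots,v_{d_s+d_c}$ spanning $E_{cs}(p)$. Then $T(\varphi):=\varphi_p(v_1,\ldots,v_{d_s+d_c})$ is an order-zero $d_u$-current with $f^*T=\det(d_pf^{-1}|_{E_{cs}(p)})\,T$, yet $\mathrm{WF}(T)=T_p^*\mathbb T^d\setminus\{0\}$.

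Invariance of the wavefront set under the cotangent lift only confines it to closed invariant cones. Radial estimates upgrade regularity only from a priori microlocal regularity above a $\lambda$-dependent threshold, which you neither state nor check.

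So the bound $\mathrm{WF}(\mu^u)\subset E_s^*$ is left unproved. This is the step that makes $\lambda^u(A)$ a resonance and yields $\lambda^u(A)\leq e^{P(J^c_f)}$ in Corollary \ref{corollary}.
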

\begin{proof}
Let $p,q\in R$. If $p\in \mathcal W^u_R(q)$, then $\mu^u_p=\mu^u_q$ which implies that $\mu^u_p(\varphi)=\mu^u_q(\varphi)$. If $q\in \mathcal W^{cs}_R(p)$, then using the holonomy invariance \eqref{eq:holhol}, we have
$$ \mu_q^u(\varphi)=\int_{\mathcal W^{u}_R(q)}\Big(\int_{\mathcal W^{cs}_R(y)}\varphi\Big)d\mu^u_q(y)=\int_{\mathcal W_R^u(p)}\Big(\int_{\mathcal W^{cs}_R(z)}\varphi\Big)d\big((\mathrm{Hol}^{cs}_{q,p})_*\mu^u_q\big)(z)=\mu_p^u(\varphi).$$
Using the local product structure, we deduce that $\mu^u_q$ does not depend on the point $q\in R$. Using a partition of unity $(\chi_i)_{i=1}^m$ associated to a finite cover $(R_i)_{i=1}^m$ by rectangles, we can define $\mu^u$ globally on $\mathbb T^d$:
$$\forall \varphi \in \Omega_{d_c+d_s}, \quad \mu^u(\varphi)=\sum_{i=1}^m \mu^u_{p_i}(\chi_i \varphi), \ p_i\in R_i, $$
and the value does not depend on any choice.
 The first equation in \eqref{eq:WF} is a consequence of \eqref{eq:confu}. Indeed, let $\varphi \in \Omega_{d_c+d_s}$ be such that $\varphi$ is supported in $R_i$ and $f_*\varphi$ is supported in $R_j$, then for some $p_i\in R_i$ and $p_j\in R_j$,
 \begin{align*}
 f^*\mu^u(\varphi)&=\mu^u(f_*\varphi)=\int_{\mathcal W_{R_j}^u(p_j)}\Big(\int_{\mathcal W^{cs}_{R_j}(y)}f_*\varphi\Big)d\mu^u_{p_j}(y)=\int_{\mathcal W_{R_j}^u(p_j)}\Big(\int_{f^{-1}(\mathcal W^{cs}_{R_j}(y))}\varphi\Big)d\mu^u_{p_j}(y)
 \\&=\int_{\mathcal W_{R_j}^u(p_j)}\Big(\int_{\mathcal W_{R_j}^{cs}(f^{-1}(y))}\varphi\Big)d\mu^u_{p_j}(y)=\int_{\mathcal W_{R_i}^u(p_i)}\Big(\int_{\mathcal W_{R_i}^{cs}(z)}\varphi\Big)d(f^*\mu^u_{p_j})(y)
 \\&=\lambda^u(A)\int_{\mathcal W_{R_i}^u(p_i)}\Big(\int_{\mathcal W_{R_i}^{cs}(z)}\varphi\Big)d\mu^u_{p_i}(y)=\lambda^u(A)\mu^u(\varphi).
 \end{align*}
 Note as well that $\mu^u$ is not zero since the system of measure has full support by Proposition \ref{prop:umarg}. 
 
 It remains to prove the wavefront set bound. We note that the center-stable manifolds are a priori only $C^1$, hence, we will need to use some smoothness along the stable direction to obtain the desired regularity statement. Consider  a small rectangle $R\subset \mathbb T^d$. Let $\tau^{cs}\in \Lambda^{d_s+d_c}E_{cs}$ be oriented of norm $1$. Define for any Borel set $B\subset R$ the measure
 $$m_R(B):=\int_{\mathcal W^u_R(q)}\mathrm{Vol}_{\mathcal W^{cs}_R(q)}(B\cap  \mathcal W^{cs}_R(y))d\mu^u_q(y).$$
 Then we have for any $\varphi \in \Omega_{d_s+d_c}$ supported in  $R$:
 $$\mu^u(\varphi)=\int_R \langle \varphi, \tau^{cs}\rangle dm_R. $$
Choose a smooth transversal $\Sigma$ to $\mathcal W^s$. We first disintegrate
the Riemannian measure of each center-stable plaque along the strong stable plaques, and
then integrate the corresponding quotient measures against the measure $\mu^u_q$. We obtain a finite measure $\hat{\mu}$ on $\Sigma$ such that for any $\varphi \in \Omega_{d_s+d_c}$ supported in $R$, we have 
 \begin{equation}
 \label{eq:GBGW}
 \mu^u(\varphi)=\int_\Sigma\int_{\mathcal W^s_R(q)}\langle \phi(z),b_q(z)\rangle d\mathrm{Vol}_{\mathcal W^s(q)}(z)d\hat \mu(q),
 \end{equation}
 where $b_q$ is a vector-valued density.  We claim that for any $r\geq 0$, 
 \begin{equation}
 \label{eq:b_q}
 \mathrm{ess \, sup}_{q\in\Sigma}\|b_q\|_{C^r(\mathcal W^s_R(q))}<\infty.
 \end{equation}
 The proof of the claim is an adaptation of \cite[Proposition 1.8]{GBGW}. Indeed, using that $E_u$ dominates $E_{cs}$ as well as the uniform contraction of $E_s$, the backward graph transform implies that $E_{cs}$ (and hence $\tau^{cs}$) is uniformly smooth along $E_s$ in the sense above. This implies the uniform smoothness of $b_q/\|b_q\|$ along $E_s$. The norm of $b_q$ on the other hand is given by the Jacobian of strong stable holonomy between local $d_c$-dimensional transversals inside a center-stable plaque. Let $S(z)$ denote the  $d_c$-plane tangent to the center transversal inside $\Sigma\cap \mathcal W^{cs}$. Denote by $\tilde f$ the lift of $f$ to the Grassmannian, and let 
 $$\mathcal J(x,V):=|\mathrm{det}(df_x|_V)|. $$ Then \cite[Theorem 7.1,
Remark 7.2, and equation (7.3)]{Pe} (see also \cite[Equation (1.12)]{GBGW}) implies that 
 $$\mathrm{Jac}(\mathrm{Hol}^{s})(z)=\prod_{n=0}^{+\infty}\frac{\mathcal J(z,\tilde f^n S(z))}{\mathcal J(\mathrm{Hol}^sz,\tilde f^n S(\mathrm{Hol}^s z))}. $$
 Using the uniform stable contraction and the fact that $E_c$ dominates $E_s$, we deduce that the Jacobian is uniformly smooth along $E_s$, we refer to the proof of \cite[Proposition 1.8]{GBGW} for more details. 
 
 Consider $p\in R$ and  $\xi \notin E_s^*(p)$ and we shall prove that $(p,\xi)$ is not in the wavefront set. Choose a phase function $S$ such that $d_pS=\xi$ and a cutoff $\chi$ near $p$, and compute
 $$\mu^u(\chi e^{iS/h})=\int_\Sigma\int_{\mathcal W^s_R(q)}\langle \chi(z),b_q(z)\rangle e^{iS(z)/h}d\mathrm{Vol}_{\mathcal W^s(q)}(z)d\hat \mu(q). $$
 Since the density is smooth along $E_s$, we can integrate by parts in the stable variable to show that the integrand is $O(h^{\infty})$, that is, for any $N>0$, one can find a constant $C_N>0$ such that it is bounded by $C_Nh^N$, as long as $dS$ does not vanish on $\mathcal W_R^{s}(q)$, which can be ensured near $p$ by the definition of $E_{s}^*$, see \eqref{eq:dual}. This shows that $\xi \notin \mathrm{WF}(\mu^u)$, see for instance \cite[Definition 4.1.2]{Lef}, and thus $ \mathrm{WF}(\mu^u)\subset E_{s}^*$.
\end{proof}

\subsection{Characterizing the $u$-MMEs}
Having constructed a measurable system of $u$-measures, we now characterize $u$-MMEs by their conditionals along the strong-unstable foliation. We recall the following result due to Buzzi, Fisher and Tahzibi \cite{BFT}, see also \cite[Proposition 5.3]{Ta}.
\begin{proposition}
\label{prop:Tah}
Let $f\in \mathrm{Diff}^{\infty}(M)$ be a partially hyperbolic diffeomorphism on a closed manifold $M$ with a measurable Margulis system of measures $(\nu^u_{x})_{x\in M}$ such that 
\begin{itemize}
\item for any $x\in M$, $\nu^u_{x}$ is fully supported in $\mathcal W^u(x)$,
\item there is $D_u>0$ such that for any $x\in M$, 
$ f_*\nu^u_x=D_u^{-1}\nu^u_{f(x)}.$
\end{itemize}
Then for any invariant measure $\nu$, one has
$ h^u_{\nu}(f)\leq \log (D_u),$
with equality if and only if the disintegration of $\nu$ along $\mathcal W^u$ is given by $\nu^u_x$ $ ($up to a constant rescaling$)$, $\nu$-a.e.
\end{proposition}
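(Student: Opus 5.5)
The plan is to follow the classical Ledrappier--Young / Margulis comparison argument: express $h^u_\nu(f)$ as a conditional entropy with respect to a measurable partition subordinate to $\mathcal W^u$, and compare the conditional measures of $\nu$ with the normalized Margulis measures via Jensen's inequality.

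\textbf{Step 1: the subordinate partition.} By ergodic decomposition (both sides of the claimed inequality are affine in $\nu$), I may assume $\nu$ ergodic. Following \cite{HuHuWu}, I fix a measurable partition $\xi$ subordinate to $\mathcal W^u$. That is, $\nu$-a.e. atom $\xi(x)$ is contained in $\mathcal W^u(x)$ and contains a neighbourhood of $x$ in the leaf. I also require that $f^{-1}\xi$ is finer than $\xi$ and that $\bigvee_{n\geq 0} f^{-n}\xi$ is the partition into points. Then
$$h^u_\nu(f)=H_\nu(f^{-1}\xi\mid \xi)=-\int \log \nu^\xi_x\big((f^{-1}\xi)(x)\big)\,d\nu(x),$$
where $(\nu^\xi_x)$ are the conditional measures of $\nu$ on the atoms of $\xi$.

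\textbf{Step 2: the Margulis weights.} I introduce the normalized Margulis weight
$$g(x):=\frac{\nu^u_x\big((f^{-1}\xi)(x)\big)}{\nu^u_x(\xi(x))}.$$
It is well defined and positive because $\nu^u_x$ has full support and atoms of $\xi$ contain leaf neighbourhoods. The hypothesis $f_*\nu^u_x=D_u^{-1}\nu^u_{f(x)}$ gives
$$\nu^u_x\big((f^{-1}\xi)(x)\big)=D_u^{-1}\,\nu^u_{f(x)}\big(\xi(f(x))\big).$$
Hence
$$-\log g(x)=\log D_u+\phi(x)-\phi(f(x)),\qquad \phi(x):=\log \nu^u_x(\xi(x)).$$
Integrating against the invariant measure $\nu$ yields $-\int\log g\,d\nu=\log D_u$, provided the coboundary term integrates to zero. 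I expect this integrability to be \emph{the main technical obstacle}, since $\phi$ need not be in $L^1(\nu)$. I would handle it with the standard lemma: if $\phi\circ f-\phi$ has integrable positive (or negative) part, then it is integrable with integral zero. The needed one-sided bound comes from $\log g\leq 0$, which holds because $(f^{-1}\xi)(x)\subset\xi(x)$.

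\textbf{Step 3: Jensen.} Write $p(x):=\nu^\xi_x((f^{-1}\xi)(x))$. Then
$$h^u_\nu(f)-\log D_u=\int\log\frac{g}{p}\,d\nu=\int\Big(\int_{\xi(x)}\log\frac{g}{p}\,d\nu^\xi_x\Big)d\nu(x).$$
For a.e. $x$, Jensen on the probability $\nu^\xi_x$ bounds the inner integral by $\log\int_{\xi(x)} \tfrac{g}{p}\,d\nu^\xi_x$. I then decompose $\nu^\xi_x$ along the atoms of $f^{-1}\xi$ contained in $\xi(x)$; on each such atom $g/p$ is constant. This gives $\int \tfrac gp\, d\nu^\xi_x\leq \sum g\le 1$, since the normalized Margulis masses of disjoint sub-atoms sum to at most one. (If there are uncountably many atoms, I would phrase this as a conditional expectation bound $\mathbb E_{\nu^\xi_x}[g/p]\leq 1$.) Therefore $h^u_\nu(f)\leq\log D_u$.

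\textbf{Step 4: the equality case.} Equality forces equality in Jensen, so $p=g$ $\nu$-a.e. This means $\nu^\xi_x$ and the normalized $\nu^u_x|_{\xi(x)}$ give the same mass to every atom of $f^{-1}\xi$. Applying the same reasoning to $f^{n}$, whose $u$-entropy is $n\,h^u_\nu(f)=n\log D_u$ and which has Margulis constant $D_u^n$, gives agreement on atoms of $f^{-n}\xi$ for all $n$. Since these partitions generate, $\nu^\xi_x=\nu^u_x(\cdot)/\nu^u_x(\xi(x))$ for $\nu$-a.e. $x$. By \cite[Lemma 2.10]{CPZ20}, this is the claimed identification up to rescaling.

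Conversely, if the conditionals of $\nu$ along $\mathcal W^u$ are the $\nu^u_x$ up to rescaling, then $p=g$ a.e. The computation of Step 2 then gives $h^u_\nu(f)=-\int\log g\,d\nu=\log D_u$ directly.
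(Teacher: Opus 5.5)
Your argument is correct and is essentially the proof the paper relies on. The paper does not reprove this statement but quotes it from \cite{BFT} (see also \cite[Proposition 5.3]{Ta}), where it is obtained by the same Ledrappier-type argument: take an increasing partition $\xi$ subordinate to $\mathcal W^u$, compare $-\log \nu^\xi_x((f^{-1}\xi)(x))$ with the normalized Margulis weights via the coboundary identity and Jensen's inequality, and get the equality case by passing to the refinements $f^{-n}\xi$. The one point you leave implicit, transferring the equality case between $\nu$ and its ergodic components, can be avoided altogether: fix an increasing subordinate partition for $\nu$ itself, since none of your coboundary, Jensen or generating steps uses ergodicity.
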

Together with Propositions \ref{prop1st} and \ref{prop:umarg}, we deduce the following characterization of $u$-MMEs.
\begin{corollary}
\label{corollary}
Let $f\in \mathcal A^\infty_{d_s,d_c,d_u}(\mathbb T^d)$ be as in Proposition \ref{prop:umarg}.  Then
\begin{itemize}
\item we have the equalities
\begin{equation}
\label{eq:allequal}
e^{P(J^c_f)}=e^{h^u_{\mathrm{top}}(f)}=e^{h^u_{\mathrm{top}}(A)}=\lambda^u(A);
\end{equation}
\item as well as $\mu_{J^c_f}\in \mathcal M^u(f)$;
\item and for $\nu\in \mathcal P_f(\mathbb T^d)$, we have $\nu\in \mathcal M^u(f)$ if and only if the conditionals of $\nu$ along $\mathcal W^u$ are given by $(\mu^u_x)_{x\in \mathbb T^d}$ $($up to a constant rescaling$)$, $\nu$-a.e.
\end{itemize}
\end{corollary}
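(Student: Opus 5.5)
We chain three inequalities and close the loop. By Lemma~\ref{lemm:ineq}, $h^u_{\mathrm{top}}(f)\geq P(J^c_f)$. In the other direction, Proposition~\ref{prop:umarg} supplies a fully supported $u$-system $(\mu^u_x)$ with $f_*\mu^u_x=\lambda^u(A)^{-1}\mu^u_{f(x)}$; this is \eqref{eq:confu} rewritten. Since $\mu^u_x=(Q_{\tilde x}^{-1})_*\mathrm{Leb}$ with $Q$ continuous, the system is continuous, hence measurable.

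Proposition~\ref{prop:Tah}, applied with $D_u=\lambda^u(A)$, then gives $h^u_\nu(f)\leq\log\lambda^u(A)$ for every invariant $\nu$. The variational principle \eqref{eq:varia111} upgrades this to $h^u_{\mathrm{top}}(f)\leq \log\lambda^u(A)$.

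It remains to bound $\lambda^u(A)$ by $e^{P(J^c_f)}$, and here Lemma~\ref{lemmWF} is the key input. The current $\mu^u$ is nonzero and satisfies $f^*\mu^u=\lambda^u(A)\mu^u$ with $\mathrm{WF}(\mu^u)\subset E_s^*$. It is therefore a co-resonant state for the action on $(d_s+d_c)$-forms, so $\lambda^u(A)\in\mathrm{Res}_{d_s+d_c}(f)$. Co-resonances for $f^*$ coincide with resonances for $f_*$, because the spectral projector's range and dual range have the same rank.

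Proposition~\ref{prop1st} then gives $\lambda^u(A)\leq e^{P(J^c_f)}$. That proposition assumes topological mixing, which holds because Anosov diffeomorphisms of tori are conjugate to hyperbolic automorphisms, and these are mixing. Chaining everything,
$$P(J^c_f)\leq h^u_{\mathrm{top}}(f)\leq\log\lambda^u(A)\leq P(J^c_f),$$
so all three quantities are equal.

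For $A$ itself, we apply the same chain with $f=A$. This is legitimate since $A\in\mathcal A^\infty_A(\mathbb T^d)$, or in dimension $3$ directly. Alternatively, compute directly: $h^u_{\mathrm{top}}(A)=\log|\det A|_{E_u^A}|$, since strong-unstable balls grow by exactly this factor. Either way we get $e^{h^u_{\mathrm{top}}(A)}=\lambda^u(A)$.

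The second bullet then follows from the equality case of Lemma~\ref{lemm:ineq}. The third bullet is the equality case of Proposition~\ref{prop:Tah}: $\nu\in\mathcal M^u(f)$ means $h^u_\nu(f)=h^u_{\mathrm{top}}(f)=\log\lambda^u(A)=\log D_u$, which holds if and only if the conditionals of $\nu$ along $\mathcal W^u$ are the $\mu^u_x$ up to scaling.

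The main obstacle is the spectral step identifying $\lambda^u(A)$ as an element of $\mathrm{Res}_{d_s+d_c}(f)$. One must check that the definition \eqref{eq:cores} matches the characterization of poles through $f^{-1}$, with the wavefront condition in $E_s^*$ and with the correct degree $d-(d_s+d_c)=d_u$ for the current. Once this duality is granted, the rest is bookkeeping.
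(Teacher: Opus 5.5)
Your proposal is correct and follows the paper's proof essentially step for step. The paper runs the same loop $P(J^c_f)\leq h^u_{\mathrm{top}}(f)\leq \log\lambda^u(A)\leq P(J^c_f)$, using Lemma~\ref{lemm:ineq}, then Propositions~\ref{prop:umarg} and~\ref{prop:Tah} with the variational principle, then Lemma~\ref{lemmWF} with Proposition~\ref{prop1st} (mixing coming from the conjugacy to $A$); it then applies the result to $A$ and reads off the last two bullets from the equality cases, and your direct computation of $h^u_{\mathrm{top}}(A)$ from the linear scaling of strong-unstable plaques is a valid alternative for that one equality.
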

\begin{proof} Note that $f$ is topologically mixing since it is conjugate to its linearization. 
We first prove \eqref{eq:allequal}. From \eqref{eq:ineq}, we have $h^u_{\mathrm{top}}(f)\geq P(J^c_f).$ Next, we use Proposition \ref{prop:umarg}, Proposition \ref{prop:Tah} applied to the system $(\mu^u_x)_{x\in \mathbb T^d}$ (note that the system is holonomy-invariant, hence continuous) and \eqref{eq:varia111} to deduce that $\lambda^u(A)\geq e^{h^u_{\mathrm{top}}(f)}$. Finally, Lemma \ref{lemmWF} implies that $\lambda^u(A)\in \mathrm{Res}_{d_s+d_c}(f)$ and \eqref{eq:equality} implies that $e^{P(J^c_f)}\geq \lambda^u(A)$. In particular, all the inequalities are equalities and we deduce that $e^{P(J^c_f)}=e^{h^u_{\mathrm{top}}(f)}=\lambda^u(A)$. Applying the result to $A$ gives  \eqref{eq:allequal}.

The second point follows from \eqref{eq:allequal} and Lemma \ref{lemm:ineq}. The last point follows from \eqref{eq:allequal} and Proposition  \ref{prop:Tah}.
\end{proof}
\section{Defining the $cs$-system}
\label{sec:cs}
In this section, we construct a $cs$-Margulis system. The $cs$-system could be constructed from the resonant state as in \cite[\S 5]{Humu}. We choose instead to define it more directly using the $u$-MME $\mu_{J^c_f}$ and the $u$-Margulis system $(\mu^u_{x})_{x\in \mathbb T^d}.$ 
\begin{proposition}
\label{prop:cs}
Let $f\in \mathcal A^\infty_{d_s,d_c,d_u}(\mathbb T^d)$ be as in Proposition \ref{prop:umarg}. Then there exists a $cs$-system $(\mu^{cs}_x)_{x\in \mathbb T^d}$ such that:
\begin{itemize}
\item for any $x\in \mathbb T^d$, $\mu^{cs}_x$ has full support in $\mathcal W^{cs}(x)$;
\item one has 
\begin{equation}
\label{eq:confcs}
\forall  x\in \mathbb T^d,\quad f_*\mu^{cs}_x=\lambda^{u}(A)\mu^{cs}_{f(x)};
\end{equation}
\item the system is $\mathrm{Hol}^{u}$-invariant. For any rectangle $R\subset \mathbb T^d$ and any $p,q\in R$, we have $(\mathrm{Hol}^{u}_{p,q})_*\mu^{cs}_p=\mu^{cs}_q.$
\end{itemize}
The $u$-system from Proposition \ref{prop:umarg} and the $cs$-system $(\mu^{cs}_x)_{x\in \mathbb T^d}$ satisfy \eqref{eq:localproduct}.
\end{proposition}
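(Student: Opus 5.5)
The plan is to obtain the $cs$-system by disintegrating the $u$-MME $\mu:=\mu_{J^c_f}$ against the $u$-system. By Corollary \ref{corollary}, $\mu\in\mathcal M^u(f)$, so by Proposition \ref{prop:Tah} its conditionals along $\mathcal W^u$ are $\mu^u_x$ up to rescaling, $\mu$-a.e. Fix a small rectangle $R$ and $q\in R$. Equation \eqref{eq:CPZ} then gives
\[
\mu(\phi)=\int_{\mathcal W^{cs}_R(q)}\frac{1}{\mu^u_y(\mathcal W^u_R(y))}\int_{\mathcal W^u_R(y)}\phi\,d\mu^u_y\,d\tilde\mu_q(y).
\]
I would then define
\[
\mu^{cs}_q|_{\mathcal W^{cs}_R(q)}:=\frac{d\tilde\mu_q(y)}{\mu^u_y(\mathcal W^u_R(y))}.
\]
This is a well-defined measure because $\mu^u_y(\mathcal W^u_R(y))>0$ by full support. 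With this choice, \eqref{eq:localproduct} holds with $c_R=1$.

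\textbf{$\mathrm{Hol}^u$-invariance.} For $p\in R$ the map $\mathrm{Hol}^u_{q,p}$ sends $E\subset\mathcal W^{cs}_R(q)$ to a set meeting exactly the same $u$-plaques. Hence $\tilde\mu_p(\mathrm{Hol}^u_{q,p}E)=\tilde\mu_q(E)$. Moreover, by $\mathrm{Hol}^{cs}$-invariance of $\mu^u$ inside the rectangle, $\mu^u_y(\mathcal W^u_R(y))$ is unchanged along $u$-plaques. Together these give $(\mathrm{Hol}^u_{q,p})_*\mu^{cs}_q=\mu^{cs}_p$ on $R$.

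\textbf{Globalization.} Two points need care here.

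(i) The local measures must be consistent on overlaps of plaques coming from different rectangles. I would check this by comparing both definitions on a small rectangle contained in the intersection. The key observation is that the local normalization $\mu^u_y(\mathcal W^u_R(y))$ exactly cancels the rectangle-dependence of $\tilde\mu_q$. Note also that $\mu(\phi)$ is intrinsic and the $u$-measures are canonical, not rescaled.

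(ii) The measures must be fixed on whole leaves $\mathcal W^{cs}(x)$. I would take a countable cover by rectangles and glue the plaque measures along each leaf. Consistency from (i) makes this unambiguous. Full support follows because $\mu$ has full support (it is an equilibrium state of a transitive Anosov map), so $\tilde\mu_q$ charges every open subset of the plaque. This last point needs that the measures are defined at every point, not just almost everywhere, which I address in the obstacle below.

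\textbf{Conformality \eqref{eq:confcs}.} Using $f$-invariance of $\mu$ and $f^*\mu^u_x=\lambda^u(A)\mu^u_{f^{-1}x}$, I would compare the product formula on $R$ with the one on $f(R)$, refined to a subrectangle so that both plaque families match. The $u$-factor scales by $\lambda^u(A)^{-1}$ under $f_*$, so the $cs$-factor must scale by $\lambda^u(A)$.

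\textbf{Main obstacle.} Conditional measures are defined only almost everywhere, while the statement requires the properties for every $x$ and full support on every leaf. I would first try to upgrade as follows. The formula $d\tilde\mu_q/\mu^u_y(\mathcal W^u_R(y))$ involves no conditional at all: $\tilde\mu_q$ is defined for every $q$ via \eqref{eq:tildemu}, and $y\mapsto\mu^u_y(\mathcal W^u_R(y))$ is continuous. So only the identification of $u$-conditionals with $\mu^u$ uses a.e. statements, and that identification is needed only to prove the product formula, which is an integral identity insensitive to null sets. The remaining step is to verify the consistency in (i) for every pair of rectangles, not merely almost every one. I expect this to follow from continuity of both sides in $q$.
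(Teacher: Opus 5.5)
Your construction is correct, and the measure you obtain is the same as the paper's. By $\mathrm{Hol}^{cs}$-invariance of $(\mu^u_x)$, the mass $\mu^u_y(\mathcal W^u_R(y))$ is constant in $y\in R$, not merely along $u$-plaques. So your density is a constant, and $\mu^{cs}_q|_{\mathcal W^{cs}_R(q)}=\tilde\mu_q/\mu^u_q(\mathcal W^u_R(q))$, which is \eqref{eq:mucsZ} with $Z$ the $u$-plaque of $R$.

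The difference lies in the globalization. The paper lifts to $\mathbb R^d$ and uses the global $cs/u$-product structure of Theorem \ref{theo:FPS}. It fixes one bounded open $Z$ in a single strong-unstable leaf, so that $[Z,Y]_{cs,u}$ is defined for every bounded Borel $Y$ in every center-stable leaf. This gives the following.
\begin{itemize}
\item The measures live on whole leaves from the start.
\item $\mathrm{Hol}^u$-invariance for global holonomies reduces to $[Z,Y]_{cs,u}=[Z,\mathrm{Hol}^u_{\tilde x,\tilde y}Y]_{cs,u}$.
\item Conformality is a two-line computation replacing $Z$ by $FZ$.
\item Descent to $\mathbb T^d$ follows from $\mathbb Z^d$-equivariance.
\item The only consistency check is independence of $Z$ and of the base leaf. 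This is the same product-formula computation as your step (i); compare \eqref{eq:CPZ2} and \eqref{eq:RRR}.
\end{itemize}
Your route stays on $\mathbb T^d$ and uses only the local product structure together with a $\mathrm{Hol}^{cs}$-invariant $u$-system identified with the conditionals. It would therefore apply without global product structure. The cost is gluing plaque measures over a countable cover and refining rectangles to check conformality.

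Your ``main obstacle'' is not actually an obstacle, and no continuity in $q$ is needed. The product formula on $R$ holds for every base point $q\in R$, because the almost-everywhere identification of conditionals only enters under the $\tilde\mu_q$-integral. Let $\tilde\mu_{q,R}$ denote \eqref{eq:tildemu} computed with the plaques of $R$. Take a subrectangle $R''\subset R$ and $E\subset \mathcal W^{cs}_{R''}(q)$, and apply the formula on $R$ to $\mathbf 1_{\bigcup_{y\in E}\mathcal W^u_{R''}(y)}$. This gives, exactly and for every $q$,
\[
\frac{d\tilde\mu_{q,R''}(y)}{\mu^u_y(\mathcal W^u_{R''}(y))}=\frac{d\tilde\mu_{q,R}(y)}{\mu^u_y(\mathcal W^u_{R}(y))}\quad\text{on }\mathcal W^{cs}_{R''}(q).
\]
This identity is the consistency you need in step (i).
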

\begin{proof}
The idea is to construct $(\mu^{cs}_x)_{x\in \mathbb T^d}$ as the $cs$-conditionals of the $u$-MME $\mu_{J^c_f}$. More precisely, we lift everything to the universal cover and fix a base point $\tilde x_0\in \mathbb R^d$. For $Z\subset \widetilde{\mathcal W^u_f}(\tilde x_0)$ a non-empty bounded open set, we define
\begin{equation}
\label{eq:mucsZ}
\forall \tilde x\in \mathbb R^d, \ \forall Y\subset \widetilde{\mathcal W^{cs}_f}(\tilde x),\quad  \mu_{Z,\tilde x}^{cs}(Y):=\frac{\tilde \mu_{J^c_f}([Z,Y]_{cs,u})}{\mu^u_{\tilde x_0}(Z)},
\end{equation}
where $Y$ is any bounded Borel subset of $\widetilde{\mathcal W^{cs}_f}(\tilde x)$,   $\tilde \mu_{J^c_f}$ denotes the lift to $\mathbb R^d$ of $\mu_{J^c_f}$ and 
$$[Z,Y]_{cs,u}:=\{[z,y]_{cs,u}\mid y\in Y, \ z\in Z\},$$
denotes the Bowen bracket of $Y$ and $Z$. Note that \eqref{eq:mucsZ} is well defined since $\mu^u_{\tilde x_0}$ is fully supported. 
\\
\textbf{The system $(\mu_{Z,\tilde x}^{cs})_{\tilde x\in \mathbb R^d}$ is fully supported.}
Since $\mu_{J^c_f}$ is a probability measure, its lift $\tilde \mu_{J^c_f}$ is finite on bounded subsets of $\mathbb R^d$ and this implies that $\mu_{Z,\tilde x}^{cs}$ is finite on bounded subsets of $\widetilde{\mathcal W^{cs}_f}(\tilde x)$. Finally, since $\mu_{J^c_f}$ is an equilibrium state, it has full support and we hence deduce from \eqref{eq:mucsZ} that for any non-empty bounded open subset $Y\subset\widetilde{\mathcal W^{cs}_f}(\tilde x)$, we have $\mu_{Z,\tilde x}^{cs}(Y)>0$.

%From the definition of $\tilde \mu^u_{J^c_f}$, \eqref{eq:mucsZ} and the first step of the proof of Proposition \ref{prop:umarg}, the system $(\mu_{Z,\tilde x}^{cs})_{\tilde x\in \mathbb R^d}$ descends to a system $(\mu_{Z, x}^{cs})_{ x\in \mathbb T^d}$ on $\mathbb T^d$. As in the proof of Proposition \ref{prop:umarg}, it suffices to prove all the claimed properties of the $cs$-system on the universal cover, which we now do.
\textbf{The system $(\mu_{Z,\tilde x}^{cs})_{\tilde x\in \mathbb R^d}$ does not depend on $Z$.} 
Consider a non-empty bounded open set $Z'\subset \widetilde{\mathcal W^u_f}(\tilde x_0)$. First assume that $Z'\subset Z$. Consider $\tilde x\in \mathbb R^d$ and let $Y\subset \widetilde{\mathcal W^{cs}_f}(\tilde x)$ denote a bounded Borel set. Consider $R\subset \mathbb R^d$ a rectangle containing $[Z,Y]_{cs,u}$, then by Corollary \ref{corollary} (using that rectangles in $\mathbb R^d$ which are small enough project to rectangles on $\mathbb T^d$ as well as the compatibility of conditionals between different rectangles), we see that there exists a $\hat \mu_{R,\tilde x}$ a measure on $\widetilde{\mathcal W^{cs}_{f,R}}(\tilde x)$ such that 
\begin{equation}
\label{eq:CPZ2}
\forall \phi\in L^1(R,\tilde\mu_{J^c_f}),\quad \tilde\mu_{J^c_f}(\phi)=\int_{\widetilde{\mathcal W^{cs}_{f,R}}(\tilde x)}\int_{\widetilde{\mathcal W^{u}_{f,R}}(\tilde y)}\phi(\tilde z)d\mu^{u}_y(\tilde z)d \hat \mu_{R,\tilde x}(\tilde y).
\end{equation}
Choose $\phi=\mathbf{1}_{[Z',Y]_{cs,u}}$. Then using the holonomy invariance of $(\mu^u_{\tilde x})_{x\in \mathbb R^d}$ obtained in the last step of the proof of Proposition \ref{prop:umarg}, we obtain
$$ \tilde \mu_{J^c_f}(\mathbf{1}_{[Z',Y]_{cs,u}})=\tilde \mu_{J^c_f}({[Z',Y]_{cs,u}})=\mu^u_{\tilde x_0}(Z')\hat \mu_{R,\tilde x}(Y).$$
This implies that 
\begin{equation}
\label{eq:RRR}\mu_{Z',\tilde x}^{cs}(Y)=\hat \mu_{R,\tilde x}(Y)=\mu_{Z,\tilde x}^{cs}(Y).
\end{equation}
For a general $Z'$, we observe that $Z'':=Z'\cup Z$ is a non-empty open subset of $\widetilde{\mathcal W^u_f}(\tilde x_0)$ containing both $Z$ and $Z'$ and applying the result above gives
$$\mu_{Z',\tilde x}^{cs}(Y)=\mu_{Z'',\tilde x}^{cs}(Y)=\mu_{Z,\tilde x}^{cs}(Y).$$
This shows that for a fixed base point $\tilde x_0$, the system of measure does not depend on the choice of open set $Z\subset \widetilde{\mathcal W^u_f}(\tilde x_0)$. Consider now another base point $\tilde x_1$. Let $Z':=\mathrm{Hol}^{cs}_{\tilde x_0,\tilde x_1}(Z)\subset \widetilde{\mathcal W^u_f}(\tilde x_1)$. Then $Z'$ is a non-empty open set and by \eqref{eq:Holcsuniv}, one has $\mu^u_{\tilde x_1}(Z')=\mu^u_{\tilde x_0}(Z)$. For any $\tilde x\in \mathbb R^d$ and $Y\subset \widetilde{\mathcal W^{cs}_f}(\tilde x)$ Borel subset, one has $[Z,Y]_{cs,u}=[Z',Y]_{cs,u}$ which implies that $(\mu_{Z,\tilde x}^{cs})_{\tilde x\in \mathbb R^d}=(\mu_{Z',\tilde x}^{cs})_{\tilde x\in \mathbb R^d}$. This shows that the system does not depend on the base point. We will now write $(\mu_{\tilde x}^{cs})_{\tilde x\in \mathbb R^d}$ for this system of measures. 
\\ \textbf{The system $(\mu^{cs}_{\tilde x})_{x\in \mathbb R^d}$ descends to $\mathbb T^d$.} 
Let $\gamma \in \mathbb Z^d$ and let $\tilde x+\gamma$ be another lift of some $x\in \mathbb T^d$. Recall from \eqref{eq:Tgamma} that one has $(T_\gamma)_*\mu^u_{\tilde x}=\mu^u_{\tilde x+\gamma}$. By definition, one has $(T_\gamma)_*\tilde{\mu}_{J^c_f}=\tilde{\mu}_{J^c_f}$ and we hence deduce that $(T_\gamma)_*\mu^{cs}_{\tilde x}=\mu^{cs}_{\tilde x+\gamma}$ from \eqref{eq:mucsZ} and the fact that the system does not depend on the choice of $Z$.
As in the proof of Proposition \ref{prop:umarg}, it suffices to prove all the claimed properties of the $cs$-system on the universal cover, which we now do.
\\ \textbf{The system $(\mu_{\tilde x}^{cs})_{\tilde x\in \mathbb R^d}$ satisfies \eqref{eq:confcs}.} Let $\tilde x\in \mathbb R^d$ and $Y\subset \widetilde{\mathcal W^{cs}_f}(F\tilde x)$ denote a bounded Borel set. Then using \eqref{eq:CONF} and the $F$-invariance of $\tilde \mu_{J^c_f}$, we compute
\begin{align*}
F_*\mu_{\tilde x}^{cs}(Y)&=\mu_{\tilde x}^{cs}(F^{-1}Y)=\frac{\tilde \mu_{J^c_f}([Z,F^{-1}Y]_{cs,u})}{\mu^u_{\tilde x_0}(Z)}=\frac{\tilde \mu_{J^c_f}(F^{-1}[FZ,Y]_{cs,u})}{\mu^u_{\tilde x_0}(Z)}
\\&=\frac{\tilde \mu_{J^c_f}([FZ,Y]_{cs,u})}{\mu^u_{\tilde x_0}(Z)}=\lambda^u(A)\frac{\tilde \mu_{J^c_f}([FZ,Y]_{cs,u})}{\mu^u_{F(\tilde x_0)}(FZ)}=\lambda^u(A)\mu_{F(\tilde x)}^{cs}(Y).
\end{align*}
\textbf{The system $(\mu_{\tilde x}^{cs})_{\tilde x\in \mathbb R^d}$ is $\mathrm{Hol}^u$-invariant.}
Let $\tilde x,\tilde y\in \mathbb R^d$ and $Y\subset \widetilde{\mathcal W^{cs}_f}(\tilde x)$ denote a bounded Borel set. Then using \eqref{eq:mucsZ} and $[Z,Y]_{cs,u}=[Z,\mathrm{Hol}^{u}_{\tilde x,\tilde y}Y]_{cs,u}$, we have
$$\mu_{\tilde x}^{cs}(Y)=\frac{\tilde \mu_{J^c_f}([Z,Y]_{cs,u})}{\mu^u_{\tilde x_0}(Z)}=\frac{\tilde \mu_{J^c_f}([Z,\mathrm{Hol}^{u}_{\tilde x,\tilde y}Y]_{cs,u})}{\mu^u_{\tilde x_0}(Z)} =\mu_{\tilde y}^{cs}(\mathrm{Hol}^{u}_{\tilde x,\tilde y}Y).$$
\textbf{The systems satisfy \eqref{eq:localproduct}.}
This follows directly from \eqref{eq:CPZ2} and \eqref{eq:RRR}.
\end{proof}
\section{Proofs of Corollaries \ref{cor1}, \ref{cor2} and \ref{cor3}.}
\label{sec:last}
In this last section, we prove several corollaries of our main results. We consider $f\in \mathcal A^{\infty}_{d_s,d_c,d_u}(\mathbb T^d)$ satisfying the hypothesis of Theorem \ref{main theo}. By \cite[Theorem 20.3.9]{HK}, $\mu_{J^c_f}$ is a measure of maximal entropy if and only if $J^c_f $ is cohomologous to a constant $\lambda_c$. By the Livsic theorem \cite[Theorem 19.2.1]{HK}, this  holds if and only if for any periodic point $p$ of period $n(p)$, one has
$$\frac1{n(p)}\sum_{k=0}^{n(p)-1} J^c_f(f^kp)=\lambda_c.$$
We now show that in this case, the constant $\lambda_c$ has to be equal to $-\log \lambda^c(A)$ where $\lambda^c(A):=|\mathrm{det}(A|_{E^A_c})|$. Indeed, by \eqref{eq:equality}, one has $\log\lambda^u(A)=P(J^c_f)$ and $h_{\mathrm{top}}(f)=h_{\mathrm{top}}(A)=\log \lambda^u(A)+\log \lambda^c(A)$. By elementary properties of the pressure functional, we deduce
$$\log\lambda^u(A)=P(J^c_f)=P(\lambda_c)=h_{\mathrm{top}}(f)+\lambda_c= \log \lambda^u(A)+\log \lambda^c(A)+\lambda_c.$$
This implies that $\lambda_c=-\log \lambda^c(A)$. We start with the proof of Corollary \ref{cor1}.
\begin{proof}[Proof of Corollary \ref{cor1}]
Let $A\in \mathrm{SL}(d,\mathbb Z)\cap \mathcal A^\infty_{d_s,d_c,d_u}(\mathbb T^d)$ satisfying Assumption \ref{assump}. We note that any $f\in \mathcal A^\infty_A(\mathbb T^d)$ also satisfies Assumption \ref{assump} and the previous discussion applies. We define 
$$X:=\{f\in \mathcal A^\infty_A(\mathbb T^d)\mid \mu_{J^c_f} \text{ is not a MME for } f\}\subset \mathcal A^\infty_A(\mathbb T^d).$$
By the previous discussion, $f\in X$ if and only if there is a periodic point $p$ of period $n(p)$ such that:
$$\frac1{n(p)}\sum_{k=0}^{n(p)-1} J^c_f(f^kp)\neq -\log \lambda^c(A).$$
This condition is clearly $C^1$-open in $f$ which shows that $X$ is $C^1$-open. 

We now show that it is $C^\infty$-dense. Let $f\in \mathcal A^\infty_{d_s,d_c,d_u}(\mathbb T^d)$, since $X$ is $C^1$-open, we can suppose without loss of generality that $f\notin X$. We will now construct $C^\infty$ perturbations of $f$ which lie in $X$. Choose a periodic point $p$ of $f$ of period $n$. Let $U\subset \mathbb T^d$ be an open neighborhood of $p$ which does not contain any $f^ip$ for $i=1,2,\ldots, n-1$. For $\eps>0$, we let $A_\eps:T_p\mathbb T^d\to T_p\mathbb T^d$ be a linear map close to the identity preserving the splitting $ T_p\mathbb T^d=E_s^f(p)\oplus E_c^f(p)\oplus E_u^f(p)$ and such that $\mathrm{det}(A_\eps|_{E_c^f(p)})=e^\eps$. We let $\varphi_\eps: \mathbb T^d\to \mathbb T^d$ be a diffeomophism supported in $U$ such that 
$$\varphi_\eps(p)=p,\ d_p\varphi_\eps=A_\eps, \ \varphi_\eps\to \mathrm{Id} \text{ in } C^\infty. $$
We let $f_\eps:=f\circ \varphi_\eps$. Then $f_\eps \to f$ in $C^\infty$ and we now show that if $\eps>0$ is small enough, we have $f_\eps \in X$. Since $f\notin X$, the discussion at the beginning of the section implies that 
$$\frac1{n}\sum_{k=0}^{n-1} J^c_f(f^kp)=-\frac 1n \log |\mathrm{det}(df^n|_{E_c^f(p)})|=-\log \lambda^c(A), $$
where we used the chain rule for the first equality. Since $\varphi_\eps(p)=p$ and $\varphi_\eps$ is supported in $U$, the orbit of $p$ by $f_\eps$ is the same as the orbit of $p$ for $f$. Moreover, we have
$$\frac1{n}\sum_{k=0}^{n-1} J^c_{f_\eps}(f^k_\eps p)=-\frac 1n \log |\mathrm{det}(df^n_\eps|_{E_c^f(p)})|=-\log \lambda^c(A)-\frac \eps n,$$
where we have used that $|\mathrm{det}(df^n_\eps|_{E_c^f(p)})|=e^\eps |\mathrm{det}(df^n|_{E_c^f(p)})|$ since the diffeomorphism $\varphi_\eps$ is supported in $U$.
For $\eps>0$ small enough, this means that $f_\eps$ does not respect the periodic orbit condition and this shows that $f_\eps\in X$. This concludes the proof of the corollary.
\end{proof}
 We now show Corollary \ref{cor2}.
 \begin{proof}[Proof of Corollary \ref{cor2}]
Let $A\in \mathrm{SL}(d,\mathbb Z)\cap \mathcal A^\infty_{d_s,1,d_u}(\mathbb T^d)$ with a $1$-dimensional center bundle. Suppose furthermore that $A$ is generic and satisfies Assumption \ref{assump} . This allows us to apply \cite[Theorem 1.3]{GS23} to $f^{-1}$ and this shows that there is a $C^1$-neighborhood $\mathcal U$ of $A$ such that for any $f\in \mathcal U$, the fact that the center Lyapunov exponent of $f$ is equal to that of its linearization $A$ is equivalent to $E_s$ and $E_u$ being jointly integrable. This concludes the proof of the corollary by the discussion at the beginning of the section and the fact that any nearby $f$ satisfies Assumption \ref{assump}. 
 \end{proof}
We now show Corollary \ref{cor3}.
 \begin{proof}[Proof of Corollary \ref{cor3}]
 Under the assumptions of the corollary and from Theorem \ref{main theo}, we see that we can apply \cite[Theorem A]{GS25} which gives that $\mu_{J^c_f}$ is a $u$-Gibbs measure if and only if $f$ has constant unstable periodic data. This last statement is equivalent to $J^u_f$ being cohomologous to a constant.
 \end{proof}
 We finish this section by giving examples of $f \in A^\infty_A(\mathbb T^d)$ for some $A\in \mathrm{SL}(d,\mathbb Z)\cap \mathcal A^\infty_{d_s,d_c,d_u}(\mathbb T^d)$ which have minimal strong-unstable foliation for which Corollary \ref{cor3} applies. We start from a $A\in \mathrm{SL}(d,\mathbb Z)\cap \mathcal A^\infty_{d_s,d_c,1}(\mathbb T^d)$ which is irreducible and has a simple leading eigenvalue $\lambda_u$ whose eigenspace is spanned by $v_u$. Let $\psi:\mathbb T^d\to \mathbb R$ be a smooth function which is non-constant. We let 
 $$ \tau_\psi:\mathbb T^d\to \mathbb T^d, \ x\mapsto x+\psi(x)v_u.$$
We then define $f_\psi:=\tau_\psi\circ A$. If $\psi$ is  $C^1$-small, then $f_\psi\in \mathcal A^\infty_{d_s,d_c,1}(\mathbb T^d)$. Moreover, we have 
$$\forall x\in \mathbb T^d, \quad d_xf_\psi(v_u)=\lambda_u(1+d_{Ax}\psi(v_u))v_u. $$
This shows that $\mathcal W^u_{f_\psi}=E_u^A$ and is therefore minimal by \cite[Proposition 6]{Gog08}. Since $\psi$ is not constant, the diffeomorphism is not linear.
\bibliography{biblio}
\bibliographystyle{alpha}

\end{document}